\newcommand{\R}{{\mathbb{R}}}
\newcommand{\E}{{\mathbb{E}}}
\newcommand{\N}{{\mathbb{N}}}
\newcommand{\F}{{\mathcal{F}}} 
\renewcommand{\P}{{\mathbb{P}}} 
\newcommand{\diff}[1]{\,\mathrm{d}#1}
\newcommand{\ee}{\mathrm{e}}
\newcommand{\triple}{{\vert\kern-0.25ex\vert\kern-0.25ex\vert}}
\theoremstyle{plain}
\newtheorem{definition}{Definition}[section]
\newtheorem{theorem}[definition]{Theorem}
\newtheorem{lemma}[definition]{Lemma}
\newtheorem{corollary}[definition]{Corollary}
\newtheorem{assumption}[definition]{Assumption}
\theoremstyle{definition}
\newtheorem{remark}[definition]{Remark}
\begin{document}

\title[A drift-randomized Milstein method for SDEs]
{A randomized Milstein method for\\
stochastic differential equations with\\
non-differentiable drift coefficients}

\author[R.~Kruse]{Raphael Kruse}
\address{Raphael Kruse\\
Technische Universit\"at Berlin\\
Institut f\"ur Mathematik, Secr. MA 5-3\\
Stra\ss e des 17.~Juni 136\\
DE-10623 Berlin\\
Germany}
\email{kruse@math.tu-berlin.de}

\author[Y.~Wu]{Yue Wu}
\address{Yue Wu\\
Technische Universit\"at Berlin\\
Institut f\"ur Mathematik, Secr. MA 5-3\\
Stra\ss e des 17.~Juni 136\\
DE-10623 Berlin\\
Germany}
\email{wu@math.tu-berlin.de}
\keywords{Milstein method, stochastic differential equations, strong
convergence, Monte Carlo methods, randomized quadrature rules} 
\subjclass[2010]{65C30, 65C05, 65L20, 60H10} 

\begin{abstract}
  In this paper a drift-randomized Milstein method is introduced 
  for the numerical solution of non-autonomous stochastic differential
  equations with non-differentiable drift coefficient functions. 
  Compared to standard Milstein-type methods
  we obtain higher order convergence rates in the $L^p(\Omega)$ and 
  almost sure sense. An important ingredient in the error analysis
  are randomized quadrature rules for H\"older continuous stochastic 
  processes.  By this we avoid the use of standard arguments based on the
  It\=o-Taylor expansion which are typically applied in
  error estimates of the classical Milstein method but
  require additional smoothness of the drift and diffusion coefficient
  functions. We also discuss the optimality of our convergence rates. Finally,
  the question of implementation is addressed in a numerical experiment.
\end{abstract}

\maketitle

\section{Introduction}
\label{sec:intro}

For many decades the numerical solution of stochastic differential equations
(SDEs) has been a very active research area in the intersection of probability
and numerical analysis. A wide range of applications, for instance, in the
engineering and physical sciences as well as in computational finance is still
spurring the demand for the development of more efficient algorithms and their
theoretical justification. In particular, 
the current focus lies on the approximation of 
SDEs which cannot be treated by standard methods found in the pioneering
books of P.~E.~Kloeden and E.~Platen \cite{kloeden1999}, or G.~N.~Milstein and
M.~V.~Tretyakov \cite{milstein1995, milstein2004}. 

Due to the presence of an irregular stochastic forcing term, solutions to SDEs
are typically non-smooth. This makes it notoriously difficult to construct
higher order numerical approximations. The first successful attempt to
construct a first order numerical algorithm for the approximation
of an SDE with multiplicative
noise led to the well-known Milstein method \cite{milstein1974, milstein1975}.
Its derivation is based on the It\=o-Taylor formula and it can be generalized
to construct approximations of, in principle, arbitrary high order provided the
coefficient functions are sufficiently smooth. We again refer to the monographs
\cite{kloeden1999, milstein1995, milstein2004}. 

Unfortunately, the standard smoothness and growth requirements are often
not fulfilled in applications. For instance, already 
in the case of super-linearly growing coefficient functions, the standard
Euler-Maruyama and Milstein methods are known to be divergent in the strong and
weak sense, see \cite{hutzenthaler2011}. It is therefore necessary to apply
these methods only with caution if the SDE in question
does not fit into the framework of 
\cite{kloeden1999, milstein1995, milstein2004}.
In this paper we focus on the numerical solution of 
non-autonomous SDEs whose drift coefficient functions are not necessarily
differentiable. We will show that a higher order approximation of the
exact solution that outperforms the Euler-Maruyama method 
can still be obtained in this case by using suitable Monte Carlo randomization
techniques. 

To be more precise, 
let $T \in (0,\infty)$ and $(\Omega_W, \F^W, (\F^W_t)_{t
\in [0,T]}, \P_W)$ be a filtered probability space satisfying the usual
conditions. For $d,m \in \N$ let $W \colon [0,T] \times \Omega_W \to \R^m$ be a
standard $(\F^W_t)_{t \in [0,T]}$-Wiener process. Moreover, let $X \colon 
[0,T]\times\Omega_W \to \R^d$ be an $(\F^W_t)_{t \in [0,T]}$-adapted stochastic
process that is a solution to the It\=o-type stochastic differential equation  
\begin{align}
  \label{eq:SODE}
  \begin{split}
  \begin{cases}
    \diff{X(t)} &= f(t, X(t)) \diff{t} + \sum_{r=1}^m g^r\big(t,X(t)\big)
    \diff{W^r(t)}, \quad t \in [0,T],\\ 
    X(0) &= X_0,
  \end{cases}
  \end{split}
\end{align}
where $X_0  \in L^{2p}(\Omega_W, \F^W_0,\P_W; \R^d)$ for some $p \in
[2,\infty)$ denotes the initial value. The drift coefficient function $f \colon
[0,T] \times \R^d \to \R^d$ and the diffusion coefficient functions $g^r \colon
[0,T]\times \R^d \to \R^{d}$ for $r\in \{1,2,\ldots,m\}$ are assumed to satisfy
certain Lipschitz and linear growth conditions. For a complete statement of all
conditions on $f$ and $g^r$ we refer to Section~\ref{sec:str_error}.

If the drift function $f$ is only $\gamma$-H\"older
continuous, $\gamma \in (0,1]$,  with respect to the time variable and
Lipschitz continuous with respect to the state variable (see
Assumption~\ref{as:f}), then it is well-known that in the deterministic
case ($g^r \equiv 0$ for all $r \in \{1,\ldots,m\}$)
the order of convergence of the standard Euler method can, in general, not
exceed $\gamma$. This is even true for any deterministic algorithm that only
uses finitely many point evaluations of the drift $f$, see \cite{heinrich2008,
kacewicz1987}. 

One possibility to increase the order of convergence in such a case consists
of a suitable combination of the one-step method with certain
Monte-Carlo techniques. For deterministic differential equations this has been
studied, for example, in \cite{daun2011, heinrich2008, jentzen2009,
kacewicz2006, kruse2017, stengle1990, stengle1995}. In particular, 
in \cite{daun2011, heinrich2008, kruse2017} certain randomized Euler and
Runge-Kutta methods are introduced which converge with order $\gamma +
\frac{1}{2}$ under the same smoothness assumptions on $f$ as above. In fact,
these convergence rates are shown to be optimal within the class of all
randomized algorithms, see \cite{heinrich2008}.

The purpose of this paper is to combine these randomization techniques with the
classical Milstein scheme in order to obtain a higher order approximation
method in the case of a non-differentiable drift coefficient function $f$. 
For the introduction of the resulting \emph{drift-randomized Milstein
method} let $\pi_h$ be a not necessarily equidistant temporal grid of the form 
\begin{align}
  \label{eq:grid}
  \pi_h:=\{t_j\; :\; j=0,1,\ldots,N_h, 
  \; 0=t_0<t_1<\ldots < t_{N_h-1}< t_{N_h}=T\},
\end{align}
where $N_h \in \N$ and $h_j:=t_j-t_{j-1}$ is the width of the $j$-th step.
Given a temporal grid $\pi_h$ we denote the associated vector of all step sizes
by 
\begin{equation}
  \label{eq:stepsizes}
  h:=(h_j)_{j=1}^{N_h} \in \R^{N_h} \quad \text{ with } t_n = \sum_{j=1}^{n}
  h_j.
\end{equation}
The maximum step size in $\pi_h$ is then denoted by
\begin{align*}
  |h| := \max_{j\in \{1,\ldots,N_h\}} h_j.
\end{align*}
Further, let $(\tau_j)_{j \in \N}$ be an i.i.d.~family of
$\mathcal{U}(0,1)$-distributed random variables on an additional filtered
probability space $(\Omega_{\tau}, \F^{\tau}, (\F^{\tau}_j)_{j\in \N},
\P_{\tau})$, where $\F_j^{\tau}$ is the $\sigma$-algebra generated by
$\{\tau_1,\ldots, \tau_j\}$. The random variables $(\tau_j)_{j \in \N}$
represent the artificially added 
random input for the new method, which we assume
to be independent of the randomness already 
present in SDE \eqref{eq:SODE}.

The resulting numerical method will then yield a discrete-time stochastic
process defined on the product probability space 
\begin{align}
  \label{eq:Omega}
  (\Omega, \F, \P) := (\Omega_{W}\times\Omega_{\tau}, \F^W\otimes \F^{\tau},
  \P_W \otimes \P_{\tau}).
\end{align}
Moreover, for each temporal grid $\pi_h$ a 
discrete-time filtration $(\F^h_n)_{n \in \{1,\ldots,N_h\}}$
on $(\Omega, \F, \P)$ is given by 
\begin{align}
  \label{eq:filtration}
  \F^h_n := \F^W_{t_n} \otimes \F^{\tau}_n, \quad \text{ for } n \in
  \{0,1,\ldots,N_h\}.
\end{align}
Finally, for the formulation of the drift-randomized Milstein method,
we also recall the following standard notation for the stochastic increments
and iterated stochastic integrals (c.f.\cite{kloeden1999, milstein1995,
milstein2004}): For $s, t \in [0,T]$ with $s < t$ set 
\begin{align}
  \label{eq:1stinterated}
  I_{(r)}^{s,t} &:= \int_s^t\diff{W^r(u)}, \ \ \ \mbox{for \ } 
  r\in\{1,2,\ldots,m\},\\
  \label{eq:2ndinterated}
  I_{(r_1,r_2)}^{s,t}&:=\int_s^t\int_s^{u_1}
  \diff{W^{r_1}(u_2)}\diff{W^{r_2}(u_1)},
  \ \ \  \mbox{for\ }r_1, r_2\in\{1,2,\ldots,m\}.
\end{align}
We further introduce the mapping $g^{r_1,r_2} \colon
[0,T] \times \R^d \to \R^d$ given by
\begin{equation}
  \label{eq:diffg}
  g^{r_1,r_2}(t,x):=\frac{\partial g^{r_1}}{\partial x}(t,x)g^{r_2}(t,x),
\end{equation}
for all $r_1, r_2\in\{1,2,\ldots,m\}$, $t \in [0,T]$, $x \in \R^d$.
Then, the \emph{drift-randomized Milstein method} on the grid $\pi_h$ is given
by the split-step recursion 
\begin{align}
  \label{eq:RandM}
  \begin{split}
    X_h^{j, \tau} = &X_h^{j-1} + \tau_j h_j f\big( t_{j-1}, X_h^{j-1} \big)
    + \sum_{r=1}^m g^r\big(t_{j-1},X_h^{j-1} \big)I^{t_{j-1},t_{j-1}+ \tau_j
    h_j }_{(r)}, \\ 
    X_h^j =& X_h^{j-1} + h_jf( t_{j-1} + \tau_j h_j,  X_h^{j, \tau} )
    +\sum_{r=1}^m  
    g^r(t_{j-1},X_h^{j-1})I^{t_{j-1},t_j}_{(r)}\\
    & +\sum_{r_1,r_2=1}^{m} g^{r_1,r_2}(t_{j-1}, X_h^{j-1}) 
    I_{(r_2,r_1)}^{t_{j-1},t_j},   
  \end{split}
\end{align}
for all $j \in \{1, \ldots, N_h\}$, and the initial value $X_h^0 = X_0$.

The main result of this paper then shows that this
method converges to the exact solution with respect to the norm in
$L^p(\Omega)$, $p \in [2,\infty)$. More precisely, Theorem~\ref{th:main2}
states that under Assumptions~\ref{as:X0} to \ref{as:g} there exists $C \in
(0,\infty)$ 
independent of the temporal grid $\pi_h$ such that
\begin{align*}
  \big\| \max_{n \in \{0,1, \ldots, N_h\} } | X_h^n - X(t_n)|
  \big\|_{L^p(\Omega)} \le C |h|^{\min(\frac{1}{2} + \gamma, 1)},
\end{align*}
where $\gamma \in (0,1]$ denotes as above the temporal 
H\"older regularity of the drift coefficient function. It turns out that this
convergence rate is optimal under these conditions on $f$ as we will discuss
in more detail in Section~\ref{sec:str_error}. In addition, it is
a simple consequence of Theorem~\ref{th:main2} that the drift-randomized
Milstein method is then also convergent in a pathwise sense,
see Corollary~\ref{cor:pathwise}.

In Section~\ref{sec:examples} we will also illustrate that the randomized
Milstein method is easily implemented for a scalar noise.  
For a multi-dimensional Wiener process
the joint simulation of the iterated stochastic integrals
\eqref{eq:2ndinterated} is, in general, very costly. Since this issue also
applies to the classical Milstein it is, however, not further addressed in this
paper. Instead we refer to the discussion in \cite[Chap.~5]{kloeden1999}.
Further approximation methods for the simulation of iterated stochastic
integrals are found, for instance, in \cite{gaines1994, ryden2001,
wiktorsson2001}. Moreover, it is worth mentioning that, besides the case of
commutative noise (see \cite[Chap.~10.3]{kloeden1999}), the simulation of the
iterated stochastic integrals \eqref{eq:2ndinterated} can also be
avoided if the Milstein method is combined with 
an antithetic multilevel Monte Carlo algorithm, see \cite{giles2014}.

Before we give an outline of the remainder of this paper, let us briefly
mention that drift-randomized one-step methods for the numerical solution of
SDEs have also been studied by P.~Przyby{\l}owicz and P.~Morkisz
\cite{morkisz2017, przybylowicz2015a, przybylowicz2015b, przybylowicz2014}.
Here the focus lies on randomized Euler-Maruyama type methods applied to SDEs,
whose drift-coefficient functions are of Carath\'eodory-type. In particular,
the authors derive optimal and minimal error estimates in the case of drift
coefficient functions, that are discontinuous with respect to the temporal
argument $t$.  

In the following sections we will first focus on the error analysis of the
drift-randomized Milstein method. To this end we fix further
notation and recall some useful results from stochastic analysis in
Section~\ref{sec:notation}. In Section~\ref{sec:str_error} we then formulate
the main result on the convergence of the drift-randomized Milstein method in
the $L^p(\Omega)$ and almost sure sense. In addition, this section also
includes a complete list of all
imposed conditions on the drift and diffusion coefficient functions and some
properties of the exact solution to \eqref{eq:SODE}. For the proof of our main
result stated in 
Theorem~\ref{th:main2} we then employ a framework developed in \cite{beyn2010}.
For this we first prove in Section~\ref{sec:bistab} that the method
\eqref{eq:RandM} is \emph{stochastically bistable}. The second ingredient in
the error analysis is then to show that the method is also \emph{consistent}.
This
will be done in Section~\ref{sec:consistency}. Our proof of consistency is
based on some error estimates for randomized quadrature rules applied to
stochastic processes. This result of possibly independent interest generalizes
error estimates for Monte Carlo integration from \cite{haber1966, haber1967} and
is presented in Section~\ref{sec:quadrature}. Finally, in
Section~\ref{sec:examples} we illustrate the practicability of the
drift-randomized Milstein method through a numerical experiment.


\section{Notation and preliminaries}
\label{sec:notation}

In this section we explain the notation that is used throughout
this paper. In addition, we also collect a few standard results from stochastic
analysis, which are needed in later sections.

By $\N$ we denote the set of all positive integers, while $\N_0 := \N \cup
\{0\}$. As usual the set $\R$ consists of all real numbers. By $| \cdot |$ we
denote the Euclidean norm on the Euclidean space $\R^d$ for any $d \in \N$. In
particular, if $d = 1$ then $| \cdot |$ coincides with taking
the absolute value. Moreover, the norm $| \cdot |_{\mathcal{L}(\R^d)}$ denotes
the standard matrix norm on $\R^{d \times d}$ induced by the Euclidean
norm.

We will also frequently encounter normed function spaces. First, for an arbitrary
Banach space $(E, \|\cdot\|_E)$ we denote by $\mathcal{C}^\gamma([0,T] ; E)$
with $T \in (0,\infty)$ and $\gamma \in (0,1]$ the space of all
$\gamma$-H\"older continuous $E$-valued 
mappings $v \colon [0,T] \to E$ with norm
\begin{align*}
  \| v \|_{\mathcal{C}^\gamma([0,T];E)} = \sup_{t \in [0,T]} \| v(t) \|_E + 
  \sup_{\substack{t,s \in [0,T]\\ t \neq s}} 
  \frac{\| v(t) - v(s)\|_E}{|t-s|^\gamma}. 
\end{align*}
For a given measure space $(X, \mathcal{A}, \mu)$ the set
$L^p(X; E) := L^p( X, \mathcal{A}, \mu; E)$, $p \in [1,\infty)$, consists of all
(equivalence classes of) Bochner measurable functions $v \colon X \to E$ with
\begin{align*}
  \| v \|_{L^p(X;E)} := \Big( \int_X \|v(x)\|_E^p \diff{\mu(x)}
  \Big)^{\frac{1}{p}} < \infty.
\end{align*}
If $(E, \| \cdot\|_E ) = (\R, |\cdot|)$ we use the abbreviation $L^p(X) :=
L^p(X;\R)$. If $(X, \mathcal{A}, \mu) = (\Omega, \F, \P)$ is a probability
space, we usually write the integral with respect to the probability measure
$\P$ as
\begin{align*}
  \E[ Z ] := \int_\Omega Z(\omega) \diff{\P(\omega)}, \quad Z \in
  L^p(\Omega;E).
\end{align*}
In the case of the product probability space $(\Omega, \F, \P)$ introduced in
\eqref{eq:Omega} an application of Fubini's theorem shows that
\begin{align*}
  \E[Z] = \E_W[\E_{\tau}[ Z]] = \E_{\tau}[\E_{W}[Z]], \quad Z \in
  L^p(\Omega;E),
\end{align*}
where $\E_W$ is the expectation with respect to $\P_W$ and $\E_{\tau}$ with 
respect to $\P_{\tau}$. Finally, $\mathcal{U}(0,1)$ denotes the uniform
distribution on the interval $(0,1)$. 

An important tool is the following discrete-time version of the
Burkholder-Davis-Gundy inequality from \cite{burkholder1966}. 

\begin{theorem}
  \label{th:discreteBDG}
  For each $p \in (1,\infty)$ there exist positive constants $c_p$ and $C_p$
  such that for every discrete-time martingale $(Y^n)_{n \in \N_0}$ and
  for every $n \in \N_0$ we have 
  $$c_p \big\| [Y]_{n}^{\frac{1}{2}} \big\|_{L^p(\Omega)}
  \leq \big\| \max_{j\in \{0,\ldots,n\} } |Y^{j}| \big\|_{L^p(\Omega)} 
  \leq C_p \big\| [Y]_{n}^{\frac{1}{2}} \big\|_{L^p(\Omega)},$$
  where $[Y]_n = |Y^{0}|^2 + \sum_{k=1}^{n} |Y^{k}-Y^{k-1}|^2$
  is the \emph{quadratic variation} of $(Y^n)_{n \in \N_0}$.
\end{theorem}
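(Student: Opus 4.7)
The plan is to invoke Burkholder's classical argument from \cite{burkholder1966}, which is precisely the reference already cited alongside the theorem. Since this result is standard, one could simply quote it verbatim; for the purposes of a proof sketch, I would split the problem into the base case $p=2$ and an extrapolation step covering the remaining exponents. Throughout, I write $D^k := Y^k - Y^{k-1}$ for the martingale differences, so that $[Y]_n = |Y^0|^2 + \sum_{k=1}^n |D^k|^2$.

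For $p=2$ the argument is elementary. The martingale property gives $\E[D^j \cdot D^k] = 0$ whenever $j \neq k$ (by conditioning on $\F^W_{\max(j,k)-1}$), so $\E[|Y^n|^2] = \E[|Y^0|^2] + \sum_{k=1}^n \E[|D^k|^2] = \E[[Y]_n]$. Combining this identity with Doob's $L^2$-maximal inequality applied to the nonnegative submartingale $(|Y^n|)_{n \in \N_0}$ yields both bounds simultaneously with explicit constants, settling the case $p=2$.

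For general $p \in (1,\infty)$ the standard route is via Burkholder's good-$\lambda$ inequality: one shows that for suitable $\beta > 1$ and sufficiently small $\delta > 0$,
\begin{align*}
\P\bigl(\max_{0 \le j \le n}|Y^j| > \beta\lambda,\; [Y]_n^{1/2} \le \delta\lambda\bigr) \le \varepsilon(\delta)\, \P\bigl(\max_{0 \le j \le n}|Y^j| > \lambda\bigr),
\end{align*}
with $\varepsilon(\delta) \to 0$ as $\delta \to 0$, together with the symmetric inequality in which the roles of $\max_{j \le n}|Y^j|$ and $[Y]_n^{1/2}$ are exchanged. Multiplying by $p\lambda^{p-1}$ and integrating over $\lambda \in (0,\infty)$ converts these tail comparisons into an $L^p$-comparison between the two random variables; choosing $\delta$ small enough permits one to absorb the unwanted term on the opposite side, producing the stated two-sided inequality with constants $c_p$, $C_p$ depending only on $p$.

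The main obstacle is the verification of the good-$\lambda$ inequality itself. This requires stopping the martingale at the first passage time of $\max_{k \le j}|Y^k|$ above the level $\lambda$, decomposing $Y$ into a pre-stopping part bounded by $\lambda$ and a post-stopping increment, and controlling the latter using the conditional contribution of $[Y]_n$ together with the $L^2$-case already in hand. The technical work is most delicate in the regime $1 < p < 2$, where the simple orthogonality identity of the $p=2$ case is no longer strong enough and one genuinely needs the pointwise tail comparison rather than a moment identity to recover the full range of exponents.
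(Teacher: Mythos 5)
Your proposal is correct and aligns with the paper, which offers no proof of this theorem at all: it is quoted as a known tool with a bare citation to \cite{burkholder1966}, exactly as you suggest doing. Your supplementary sketch (orthogonality plus Doob for $p=2$, then the two-sided good-$\lambda$ inequalities integrated against $p\lambda^{p-1}\diff{\lambda}$) is a standard and valid route to the full statement, though strictly speaking the good-$\lambda$ machinery postdates the cited 1966 paper, which argues via martingale transforms and interpolation; either way the result and constants depend only on $p$, as required.
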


The following theorem contains a useful estimate of stochastic It\=o-integrals
with respect to the $L^p(\Omega;\R^d)$-norm. For a proof we refer to
\cite[Section~1.7]{mao2008}.

\begin{theorem}
  \label{th:Lpstochint}
  Let $W \colon [0,T] \times \Omega_W \to \R$ be a standard $(\F_t^W)_{t \in
  [0,T]}$-Wiener process on $(\Omega_W, \F^W, \P^W)$. Let $Y \colon [0,T]
  \times \Omega_W \to \R^d$ be a stochastically integrable,
  $(\F_t^W)_{t \in [0,T]}$-adapted process with $Y \in L^p( [0,T] \times
  \Omega_W; \R^d)$ for some $p \in [2,\infty)$. Then, for all $t,s \in [0,T]$
  with $s < t$, it holds true that
  \begin{align*}
    \Big\| \int_s^t Y(u) \diff{W(u)} \Big\|_{L^p(\Omega_W;\R^d)}
    \le C_p (t-s)^{\frac{p-2}{2p}} \| Y \|_{L^p([s,t] \times \Omega_W;\R^d) }
  \end{align*} 
  with $C_p = (\frac{1}{2}p (p - 1) )^{\frac{1}{2}}$. 
\end{theorem}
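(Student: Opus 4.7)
The plan is to proceed in two steps: first establish a Burkholder-Davis-Gundy-type moment bound
\[
\E|Z(t)|^p \le \Big(\tfrac{p(p-1)}{2}\Big)^{p/2}\, \E\Big(\int_s^t |Y(u)|^2 \diff{u}\Big)^{p/2}
\]
for $Z(t) := \int_s^t Y(u) \diff{W(u)}$, and then convert the $L^2$-norm in time of $Y$ to the required $L^p$-norm via Hölder's inequality. The explicit constant $C_p = (\tfrac12 p(p-1))^{1/2}$ in the statement strongly suggests that the first step should be obtained from an Itô's formula computation rather than from a generic BDG reference.

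Concretely, I would apply Itô's formula to $\phi(z) = |z|^p$, which lies in $C^2(\R^d)$ for $p\ge 2$. For bounded $Y$ the $\R^d$-valued process $Z$ is a continuous martingale starting at $0$, and Itô's formula gives
\[
|Z(t)|^p = p\int_s^t |Z(u)|^{p-2} Z(u)^\top Y(u) \diff{W(u)} + \tfrac12\! \int_s^t\! \Big[p|Z(u)|^{p-2} |Y(u)|^2 + p(p-2) |Z(u)|^{p-4}\big(Z(u)^\top Y(u)\big)^2\Big] \diff{u}.
\]
By Cauchy-Schwarz, $(Z^\top Y)^2 \le |Z|^2 |Y|^2$, so the drift integrand is dominated by $\tfrac{p(p-1)}{2} |Z|^{p-2}|Y|^2$. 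Taking expectations (the stochastic integral vanishes for bounded $Y$) therefore yields
\[
\E|Z(t)|^p \le \tfrac{p(p-1)}{2}\, \E\int_s^t |Z(u)|^{p-2} |Y(u)|^2 \diff{u}.
\]

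To close this inequality, I would apply Hölder's inequality on $[s,t]\times\Omega_W$ with the conjugate exponents $\tfrac{p}{p-2}$ and $\tfrac{p}{2}$ to get
\[
\E\int_s^t |Z(u)|^{p-2}|Y(u)|^2 \diff{u} \le \Big(\int_s^t \E|Z(u)|^p \diff{u}\Big)^{(p-2)/p} \Big(\int_s^t \E|Y(u)|^p \diff{u}\Big)^{2/p}.
\]
Because $(|Z(u)|^p)_{u\ge s}$ is a submartingale (Jensen applied to the martingale $Z$), we have $\E|Z(u)|^p \le \E|Z(t)|^p$ for $u\in[s,t]$, so the first factor is bounded by $\big((t-s)\E|Z(t)|^p\big)^{(p-2)/p}$. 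Substituting back, dividing both sides by $(\E|Z(t)|^p)^{(p-2)/p}$, and raising to the $p/2$-th power gives
\[
\E|Z(t)|^p \le \Big(\tfrac{p(p-1)}{2}\Big)^{p/2} (t-s)^{(p-2)/2} \int_s^t \E|Y(u)|^p \diff{u},
\]
and extracting the $p$-th root delivers the claim with $C_p = \big(\tfrac12 p(p-1)\big)^{1/2}$.

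The main technical point is the reduction to bounded $Y$, needed both to guarantee $\E|Z(t)|^p<\infty$ a priori and to justify dropping the stochastic integral when taking expectations. I would handle this by the standard localization: introduce stopping times $\tau_n := \inf\{u\ge s : |Z(u)| + \int_s^u |Y(v)|^2 \diff{v} \ge n\}$, apply the argument to the stopped integrand $Y\,\one_{[s,\tau_n]}$ (whose stochastic integral is then a true martingale), and pass to the limit $n\to\infty$ via Fatou's lemma, using the hypothesis $Y\in L^p([s,t]\times\Omega_W;\R^d)$ to control the right-hand side.
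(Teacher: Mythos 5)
Your argument is correct and is essentially the standard proof of this moment bound: the paper itself gives no proof but refers to Section~1.7 of Mao's book, and the argument there is precisely the one you outline (It\^o's formula for $|z|^p$, the Cauchy--Schwarz bound on the Hessian term, H\"older with exponents $\tfrac{p}{p-2}$ and $\tfrac{p}{2}$, the submartingale bound, and a localization to reduce to the case where all moments are finite a priori). Nothing further is needed.
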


The next inequality is a
useful tool to bound the error of a numerical approximation. For a proof we
refer, for instance, to \cite[Proposition 4.1]{emmrich1999}.
 
\begin{lemma}[Discrete Gronwall's inequality]
  \label{lem:Gronwall} Consider two nonnegative sequences $(u_n)_{n\in \N},
  (w_n)_{n\in \N} \subset \R$ which for some given $a \in [0,\infty)$ satisfy 
  $$u_n \leq a + \sum_{j=1}^{n-1} w_j u_j,\quad \text{ for all }  n \in \N.$$
  Then, for all $n \in \N$, it also holds true that
  $u_n \leq a \exp(\sum_{j=1}^{n-1} w_j ).$
\end{lemma}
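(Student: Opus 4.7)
The plan is to reduce the implicit bound on $u_n$ to an explicit product formula by introducing the partial-sum sequence
\begin{align*}
  S_n := a + \sum_{j=1}^{n-1} w_j u_j, \qquad n \in \N,
\end{align*}
so that the hypothesis reads simply $u_n \le S_n$ for all $n$. Since both $(w_j)$ and $(u_j)$ are nonnegative, $(S_n)$ is monotone nondecreasing, which is the key structural property that makes a one-step recursion available.

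Next I would derive a multiplicative recursion for $S_n$. By definition,
\begin{align*}
  S_{n+1} - S_n = w_n u_n \le w_n S_n,
\end{align*}
where the inequality uses the induction-ready bound $u_n \le S_n$ together with $w_n \ge 0$. Hence $S_{n+1} \le (1 + w_n) S_n$, and applying the elementary inequality $1 + x \le \ee^x$ valid for all $x \in \R$ (in particular for $x = w_n \ge 0$) yields
\begin{align*}
  S_{n+1} \le \ee^{w_n} S_n.
\end{align*}

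From here a straightforward induction on $n$, starting from $S_1 = a$, shows that
\begin{align*}
  S_n \le a \exp\Big( \sum_{j=1}^{n-1} w_j \Big), \qquad n \in \N,
\end{align*}
and since $u_n \le S_n$ the claim follows. I do not anticipate any real obstacle: the only substantive ingredients are the monotonicity of $(S_n)$, which relies crucially on the nonnegativity assumptions, and the scalar inequality $1+x \le \ee^x$, which is exactly what converts the additive recursion in the hypothesis into the exponential bound in the conclusion. Should one prefer, the same argument can be phrased as a direct induction on $n$ applied to the statement $u_n \le a \exp(\sum_{j=1}^{n-1} w_j)$, but introducing $S_n$ as above makes the monotonicity transparent and avoids repeatedly resumming a geometric-type expression.
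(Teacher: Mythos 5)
Your proof is correct. The paper itself does not prove this lemma but cites \cite{emmrich1999} for it; your argument --- bounding $u_n$ by the monotone majorant $S_n$, deriving $S_{n+1}\le(1+w_n)S_n\le \ee^{w_n}S_n$, and inducting --- is the standard proof of this discrete Gronwall inequality and is complete as written.
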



\section{Assumptions and main results}
\label{sec:str_error}

In this section we present sufficient conditions for the convergence of
the drift-randomized Milstein method \eqref{eq:RandM}
with respect to the norm in $L^p(\Omega)$ for some $p \in [2,\infty)$.
After collecting a few important properties of the exact solution, we state and
discuss the main results of this paper, namely the convergence of the
method in the $L^p(\Omega)$-norm and in the almost sure sense.

\begin{assumption}
  \label{as:X0}
  There exists $p \in [2,\infty)$ such that the initial value 
  satisfies $X_0 \in L^{2p}(\Omega_W, \F_0^W, \P_W;\R^d)$. 
\end{assumption}

\begin{assumption}
  \label{as:f}
  The drift coefficient function $f \colon [0,T] \times \R^d \to
  \R^d$ is assumed to be continuous. Moreover, there exist $\gamma \in (0,1]$
  and $K_f \in (0, \infty)$ such that 
  \begin{align*}
    | f(t,x_1) - f(t, x_2) | &\le K_f | x_1 - x_2|,\\
    | f(t_1,x) - f(t_2, x) | &\le K_f( 1 + |x| ) |t_1 -
    t_2|^{\gamma},
  \end{align*}
  for all $t,t_1, t_2 \in [0,T]$, $x, x_1, x_2 \in \R^d$.  
\end{assumption}

For the formulation of Assumption~\ref{as:g} recall the definition of 
$g^{r_1,r_2}$ from \eqref{eq:diffg}.

\begin{assumption}
  \label{as:g}
  The diffusion coefficient functions $g^r \colon [0,T]\times\R^d \to 
  \R^{d}$, $r \in \{1,\ldots,m\}$, are assumed to be continuous. In addition,
  we assume that for every fixed $t \in [0,T]$ and $r \in
  \{1,\ldots,m\}$ the mapping $\R^d \ni x \mapsto
  g^r(t,x) \in \R^d$ is continuously differentiable. 
  Moreover, there exist $\gamma \in (0,1]$ and
  $K_g \in (0, \infty)$ with 
  \begin{align*}
    | g^{r}(t_1,x) - g^{r}(t_2,x) | &\le K_g(1+|x|) |t_1 -
    t_2|^{\min(\frac{1}{2} + \gamma,1)},\\
    \Big|\frac{\partial g^r}{\partial x}(t,x_1)-\frac{\partial g^r}{\partial
    x}(t,x_2)\Big|_{\mathcal{L}(\R^d)}&\leq K_g|x_1-x_2|,\\
    \Big| \frac{\partial g^r}{\partial x} (t,x) 
    \Big|_{\mathcal{L}(\R^d)} &\leq K_g,\\
    \big|g^{r_1,r_2}(t,x_1)-g^{r_1,r_2}(t,x_2)\big|&\leq K_g|x_1-x_2|
  \end{align*}
  for all $t_1, t_2 \in [0,T]$ and $x\in \R^d$ and
  $r,r_1,r_2\in\{1,2,\ldots,m\}$.
\end{assumption}

\begin{remark}
  (i) It directly follows from Assumption~\ref{as:f} that $f$ 
  satisfies a linear growth bound for all $t \in [0,T]$ and 
  $x \in \R^d$ of the form
  \begin{equation}
    \label{eq:lineargrowth}
    | f(t,x)| \le \tilde{K}_f \big(1 + |x| \big)
  \end{equation} 
  with $K_f \le \tilde{K}_f = \max(  K_f,   T^{\gamma} K_f + |f(0,0)|)$.

  (ii) The boundedness of $\frac{\partial g^r}{\partial x}$ 
  immediately implies that $g^r$, $r = 1,\ldots,m$, 
  is globally Lipschitz continuous. More precisely, for
  all $t \in [0,T]$ and $x_1, x_2 \in \R^d$ we have
  \begin{align}
    \label{eq:Lip_g}
    | g^{r}(t,x_1) - g^{r}(t,x_2) | &\le K_g |x_1 - x_2|. 
  \end{align}
  Together with the temporal H\"older continuity of $g^r$ this
  also implies a linear growth bound of the form
  \begin{align}
    \label{eq:lineargrowth_g}
    |g^r (t,x) | \le \tilde{K}_g \big( 1 + |x| \big)
  \end{align}
  with $K_g \le \tilde{K}_g = \max(  K_g , T^{\min(\frac{1}{2} + \gamma,1)} K_g 
  + \max_{r \in \{ 1,\ldots,m\}} |g^r(0,0)| )$. 
\end{remark}

Before moving to the main result, let us collect a few useful properties of the
exact solution $X$ to the SDE \eqref{eq:SODE}. A proof is found, e.g., in
\cite[Sect.~2.3, 2.4]{mao2008}. 

\begin{theorem}
  \label{th:Lp_est}
  Let Assumptions~\ref{as:X0} to \ref{as:g} be satisfied with $p \in
  [2,\infty)$. Then there exists an up to indistinguishability uniquely
  determined $(\F_t^W)_{t \in [0,T]}$-adapted 
  stochastic process  $X \colon [0,T] \times \Omega \to \R^d$
  satisfying \eqref{eq:SODE}. More precisely, for every $t \in [0,T]$
  it holds true that  
  \begin{align}
    \label{eq:exactint}
    X(t) = X_0 + \int_{0}^{t} f(s, X(s) ) \diff{s}
    + \sum_{r = 1}^m \int_{0}^{t} g^r(s, X(s) ) \diff{W^r(s)}
  \end{align}
  with probability one. Moreover,
  there exists $C \in (0,\infty)$ only depending on $\tilde{K}_f$,
  $\tilde{K}_g$, $p$, and $T$ such that
  \begin{equation}
     \label{eq:Lp1}
     \big\|  \sup_{t \in [0,T]} |X(t)|  \big\|_{L^{2p}(\Omega_W)}  
     \leq C \big( 1 + \big\| X_0 \big\|_{L^{2p}(\Omega_W;\R^d)} \big).
  \end{equation}
  In addition, for all $s, t \in [0,T]$ we have 
  \begin{equation}
    \label{eq:Lp2}
    \big\| X(t)-X(s) \big\|_{L^{2p}(\Omega_W;\R^d)}
    \le C \big( 1 + \big\| X_0 \big\|_{L^{2p}(\Omega_W;\R^d)} \big)
    |t-s|^{\frac{1}{2}}.
  \end{equation} 
  In particular, it holds $X 
  \in \mathcal{C}^{\frac{1}{2}}([0,T], L^{2p}(\Omega_W;\R^d))$
  with
  \begin{align*}
    \| X \|_{\mathcal{C}^{\frac{1}{2}}([0,T], L^{2p}(\Omega_W;\R^d))} 
    \le C \big( 1 + \big\| X_0 \big\|_{L^{2p}(\Omega_W;\R^d)} \big).
  \end{align*}
\end{theorem}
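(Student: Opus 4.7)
The proof follows the classical strategy for SDEs under global Lipschitz and linear growth conditions, adapted to our setting. The plan is to establish existence and uniqueness first via a Picard iteration, then extract the $L^{2p}(\Omega_W;\R^d)$ bound on the supremum, and finally use this bound together with the coefficient estimates to deduce the $\frac{1}{2}$-Hölder regularity in time.

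For existence and uniqueness I would work in the Banach space of $(\F^W_t)_{t\in[0,T]}$-adapted processes $Y\colon[0,T]\times\Omega_W\to\R^d$ with finite norm $\|Y\|:=\sup_{t\in[0,T]}\|Y(t)\|_{L^{2p}(\Omega_W;\R^d)}$, and define the Picard iterates $X^{(0)}(t):=X_0$ and
\begin{align*}
X^{(n+1)}(t):=X_0+\int_0^t f(s,X^{(n)}(s))\diff{s}+\sum_{r=1}^m\int_0^t g^r(s,X^{(n)}(s))\diff{W^r(s)}.
\end{align*}
Using the linear growth bounds \eqref{eq:lineargrowth} and \eqref{eq:lineargrowth_g}, together with Hölder's inequality for the drift term and Theorem~\ref{th:Lpstochint} for each stochastic integral, one obtains a uniform bound on $\sup_n\|X^{(n)}\|$. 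The global Lipschitz conditions in Assumption~\ref{as:f} and \eqref{eq:Lip_g}, combined with Theorem~\ref{th:Lpstochint}, then yield an inequality of the form
\begin{align*}
\big\|X^{(n+1)}(t)-X^{(n)}(t)\big\|_{L^{2p}(\Omega_W;\R^d)}^{2p}\le C\int_0^t\big\|X^{(n)}(s)-X^{(n-1)}(s)\big\|_{L^{2p}(\Omega_W;\R^d)}^{2p}\diff{s},
\end{align*}
from which a standard iteration argument gives a Cauchy sequence in $\|\cdot\|$ whose limit $X$ satisfies \eqref{eq:exactint}. Uniqueness up to indistinguishability follows by the same Lipschitz estimate applied to the difference of two solutions, combined with Gronwall's lemma in its continuous form (or Lemma~\ref{lem:Gronwall} after discretization), and pathwise continuity of the two integrals.

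For the moment bound \eqref{eq:Lp1} I would apply the integral form \eqref{eq:exactint}, take supremum in $t\in[0,u]$ for $u\in[0,T]$, raise to the power $2p$, and estimate the three terms separately. For the drift term Hölder's inequality in time yields a factor $u^{2p-1}\int_0^u|f(s,X(s))|^{2p}\diff{s}$; the diffusion terms are handled via the discrete-time Burkholder-Davis-Gundy inequality from Theorem~\ref{th:discreteBDG} (applied after passing to expectations via Doob, or equivalently via the continuous BDG inequality) and Theorem~\ref{th:Lpstochint}. Using the linear growth bounds \eqref{eq:lineargrowth} and \eqref{eq:lineargrowth_g} one arrives at
\begin{align*}
\E\Big[\sup_{t\in[0,u]}|X(t)|^{2p}\Big]\le C\big(1+\E[|X_0|^{2p}]\big)+C\int_0^u\E\Big[\sup_{r\in[0,s]}|X(r)|^{2p}\Big]\diff{s},
\end{align*}
and an application of the (continuous) Gronwall lemma delivers \eqref{eq:Lp1}. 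The main technical care here is to ensure the supremum on the right can be absorbed; this uses the monotonicity of $s\mapsto\sup_{r\in[0,s]}|X(r)|^{2p}$ after moving it inside the time integral via Fubini.

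For the Hölder estimate \eqref{eq:Lp2}, I would write $X(t)-X(s)=\int_s^t f(u,X(u))\diff{u}+\sum_r\int_s^t g^r(u,X(u))\diff{W^r(u)}$ for $s<t$. The drift contribution is bounded by Hölder's inequality, the linear growth \eqref{eq:lineargrowth}, and the already-established \eqref{eq:Lp1}, giving a factor $|t-s|$. The diffusion contributions are bounded by Theorem~\ref{th:Lpstochint} and \eqref{eq:lineargrowth_g}, which yields a factor $|t-s|^{1/2}$ after applying \eqref{eq:Lp1} inside the integral. Since $|t-s|\le T^{1/2}|t-s|^{1/2}$, the slower $\frac{1}{2}$ rate dominates, giving \eqref{eq:Lp2}. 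The final Hölder-norm bound is then immediate from \eqref{eq:Lp1} and \eqref{eq:Lp2}. The main obstacle throughout is the careful use of BDG and the linear growth of the coefficients to get the right-hand side of the Gronwall inequality in the correct form, but no step is especially delicate since all conditions are global Lipschitz.
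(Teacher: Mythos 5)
Your proposal is correct and follows exactly the classical Picard-iteration/Burkholder--Davis--Gundy/Gronwall argument; the paper itself does not prove Theorem~\ref{th:Lp_est} but simply cites \cite[Sect.~2.3, 2.4]{mao2008}, where this same standard proof is carried out. The only minor point to watch is that the supremum inside the expectation in \eqref{eq:Lp1} requires the continuous-time BDG or Doob maximal inequality (not the discrete-time Theorem~\ref{th:discreteBDG}) together with a localization argument to justify the a priori finiteness needed for Gronwall, both of which you essentially acknowledge.
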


Let us now turn to the drift-randomized Milstein method \eqref{eq:RandM}.
In the following it is convenient to formally introduce the increment function
of the numerical method. For this let $\pi_h$ be an arbitrary temporal grid as
in \eqref{eq:grid}.
Then for each $j \in \{1,\ldots,N_h\}$ the increment function 
$\Phi^j_h \colon \R^d \times [0,1] \times \Omega_W \to \R^d$
of the $j$-th step is defined by
\begin{align}
  \label{eq:PhiM_expression}
  \begin{split}
    \Phi^j_h(y,\tau)&:= 
    h_j f ( t_{j-1} + \tau h_j, \Psi_h^j( y,\tau ) )
    + \sum_{r=1}^m g^r(t_{j-1},y) I^{t_{j-1},t_j}_{(r)} \\
    &\qquad + \sum_{r_1,r_2=1}^{m} g^{r_1,r_2}(t_{j-1},y)
    I_{(r_2,r_1)}^{t_{j-1},t_j},
  \end{split}
\end{align}
for all $y\in \R^d$ and $\tau\in [0,1]$, where
\begin{align}
  \label{eq:PsiM}
  \Psi_h^j(y,\tau) := y + \tau h_j f \big(
  t_{j-1}, y \big) + \sum_{r=1}^{m} g^r\big(t_{j-1}, y \big) 
  I^{t_{j-1}, t_{j-1}+\tau h_j}_{(r)}.
\end{align}
In terms of $\Phi_h$ we can then
rewrite the recursion defining the method \eqref{eq:RandM} by
\begin{align}
  \label{eq:transformM1}
  \begin{cases}
    X_h^{j}= X_h^{j-1}+\Phi^j_h(X_h^{j-1},\tau_j),& \quad j \in
    \{1,\ldots,N_h\}, \\ 
    X_h^0 = X_0.&
  \end{cases}
\end{align} 
The next lemma ensures that \eqref{eq:transformM1} indeed admits an adapted
sequence in $L^p(\Omega;\R^d)$.

\begin{lemma}
  \label{lem:Phi_prop}
  Let Assumptions~\ref{as:f} and \ref{as:g} be satisfied.
  Let $\pi_h$ be an arbitrary temporal grid and $j \in \{1, \ldots,N_h\}$. For
  every $Z \in L^p(\Omega, \F^h_{j-1},\P;\R^d)$, $p \in [2,\infty)$,
  it then holds true that
  \begin{align}
    \label{eq:phiwelldefined}
    \Phi_h^j(Z, \tau_j) \in L^p(\Omega, \F^h_{j},\P;\R^d).
  \end{align}
\end{lemma}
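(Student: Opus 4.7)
The plan is to split \eqref{eq:phiwelldefined} into two separate assertions, namely $\F_j^h$-measurability and finiteness of the $L^p(\Omega;\R^d)$-norm of $\Phi_h^j(Z,\tau_j)$, and to verify each by inspecting the individual summands in \eqref{eq:PhiM_expression} and \eqref{eq:PsiM}.

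For measurability, I would argue as follows. By hypothesis $Z$ is $\F^h_{j-1}$-measurable, hence $\F^h_j = \F^W_{t_j}\otimes\F^\tau_j$-measurable, and $\tau_j$ is $\F^\tau_j$-measurable. Since $f$, $g^r$, and $g^{r_1,r_2}$ are continuous, the compositions $f(t_{j-1},Z)$, $g^r(t_{j-1},Z)$, and $g^{r_1,r_2}(t_{j-1},Z)$ inherit $\F^h_{j-1}$-measurability through Borel composition. The It\=o integrals $I^{t_{j-1},t_j}_{(r)}$ and $I^{t_{j-1},t_j}_{(r_2,r_1)}$ are $\F^W_{t_j}$-measurable by their very construction, while the random-time increment $I^{t_{j-1},t_{j-1}+\tau_j h_j}_{(r)} = W^r(t_{j-1}+\tau_j h_j)-W^r(t_{j-1})$ appearing in \eqref{eq:PsiM} is $\F^h_j$-measurable due to the joint Borel measurability of $(t,\omega_W)\mapsto W^r(t,\omega_W)$ on $[t_{j-1},t_j]\times\Omega_W$ combined with the $\F^\tau_j$-measurability of $\tau_j$. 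Composing, $\Psi_h^j(Z,\tau_j)$ is $\F^h_j$-measurable, and so is $f(t_{j-1}+\tau_j h_j,\Psi_h^j(Z,\tau_j))$, from which the claim follows.

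For integrability the strategy is to bound the $L^p$-norm of each summand via the linear growth estimates \eqref{eq:lineargrowth} and \eqref{eq:lineargrowth_g} together with the independence structure inherent to the product space \eqref{eq:Omega}. First I would establish that $\Psi_h^j(Z,\tau_j)\in L^p(\Omega;\R^d)$: the deterministic drift contribution is dominated by $\tilde K_f h_j(1+|Z|)$, and for the stochastic contribution $g^r(t_{j-1},Z)\,I^{t_{j-1},t_{j-1}+\tau_j h_j}_{(r)}$ I would condition on $\tau_j$ and apply Fubini. Conditional on $\tau_j=s\in(0,1)$, the factor $g^r(t_{j-1},Z)$ is $\F^W_{t_{j-1}}\otimes\F^\tau_{j-1}$-measurable and is therefore independent of the Gaussian increment $W^r(t_{j-1}+sh_j)-W^r(t_{j-1})$, whose $L^p$-norm is of order $h_j^{1/2}$. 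Combined with $|g^r(t_{j-1},Z)|\leq\tilde K_g(1+|Z|)$ this yields $\|\Psi_h^j(Z,\tau_j)\|_{L^p}\leq C(1+\|Z\|_{L^p})$ for a constant $C$ depending only on $h_j$, $\tilde K_f$, $\tilde K_g$, $m$, and $p$. Re-applying the linear growth of $f$ then controls the first summand of $\Phi_h^j(Z,\tau_j)$, while the remaining two summands are handled by the same independence argument, using Theorem~\ref{th:Lpstochint} to bound $\|I^{t_{j-1},t_j}_{(r)}\|_{L^p}$ by a multiple of $h_j^{1/2}$ and, applied iteratively together with the linear growth of $g^{r_1,r_2}$, to bound $\|I^{t_{j-1},t_j}_{(r_2,r_1)}\|_{L^p}$ by a multiple of $h_j$.

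The technical point that requires most care is the joint measurability and the conditional independence structure on the product probability space: although $Z$ and the Wiener increment up to the random time $t_{j-1}+\tau_j h_j$ are both functions of the underlying Brownian path, they act on disjoint pieces of $W$, and $\tau_j$ is independent of everything indexed below $j$ on both factors. Making this precise through conditioning on $\tau_j$ and Fubini is the heart of the argument; once this has been set up, all the remaining linear-growth and moment estimates are elementary bookkeeping.
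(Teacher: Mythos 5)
Your proposal is correct and follows essentially the same route as the paper: split $\Phi_h^j(Z,\tau_j)$ into its three summands, get measurability from continuity of the coefficients, and bound each summand in $L^p(\Omega;\R^d)$ via the linear growth conditions, the independence of $Z$ from the relevant Wiener increments, Theorem~\ref{th:Lpstochint}, and a Fubini/conditioning argument in $\tau_j$ for the randomly stopped increment $I^{t_{j-1},t_{j-1}+\tau_j h_j}_{(r)}$ (the paper's estimate \eqref{eq5:est_incr}). The only difference is cosmetic ordering — you treat $\Psi_h^j$ first while the paper treats the two diffusion terms first — so no further comparison is needed.
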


\begin{proof}
  From the continuity of $f$, $g^r$, and $g^{r_1, r_2}$ it follows 
  that $\Phi_h^j(Z, \tau_j) \colon \Omega \to \R^d$ is $\F^h_j$-measurable.
  Hence, it remains to prove the $L^p$ boundedness of $\Phi_h^j(Z,\tau_j)$.
  As in \eqref{eq:PhiM_expression} we split 
  $\Phi_h$ into three terms
  \begin{align*}
    \Phi^{j}_h(Z,\tau_j)=: \Pi^j_1+\Pi_2^j+\Pi_3^j.
  \end{align*}
  We give estimates for these terms separately. First, 
  for the estimate of $\Pi_2^j$ we have 
  \begin{align*}
    \|\Pi^j_2\|_{L^p(\Omega;\R^d)}& = \Big\|\sum_{r=1}^m
    g^r(t_{j-1},Z) I_{(r)}^{t_{j-1},t_j} 
    \Big\|_{L^p(\Omega;\R^d)}\\
    &\leq \sum_{r=1}^m \big\|g^r(t_{j-1},Z) 
    \big\|_{L^p(\Omega;\R^d)} \|I_{(r)}^{t_{j-1},t_j}\big\|_{L^p(\Omega)}\\
    &\leq m C_p \tilde{K}_g \big( 1+\|Z\|_{L^p(\Omega;\R^d)} \big)
    h_j^{\frac{1}{2}}<\infty,
  \end{align*}
  where the penultimate line is deduced from the triangle inequality and the
  independence of $Z$ and the increment of the Brownian motion
  $I_{(r)}^{t_{j-1},t_j}$. In addition,
  the last line follows from the linear growth \eqref{eq:lineargrowth_g} of
  $g$ and Theorem~\ref{th:Lpstochint} applied to the stochastic increment.

  The estimate of $\Pi_3^j:=\sum_{r_1,r_2=1}^m
  g^{r_1,r_2}(t_{j-1},Z) I_{(r_2,r_1)}^{t_{j-1},t_j}$ is obtained
  similarly by 
  \begin{align*}
    \|\Pi^j_3\|_{L^p(\Omega;\R^d)}
    &\leq \sum_{r_1,r_2=1}^m \big\|g^{r_1,r_2}(t_{j-1}, Z) 
    \big\|_{L^p(\Omega;\R^d)} 
    \big\|I_{(r_2,r_1)}^{t_{j-1},t_j} \big\|_{L^p(\Omega)}\\
    &\le m^2 C_p^2 K_g \tilde{K}_g \big(1+\|Z 
    \|_{L^p(\Omega;\R^d)}\big) h_j < \infty,
  \end{align*}
  where the last line is deduced from the linear growth of $g^{r_2}$ and
  the boundedness of the derivative of $g^{r_1}$. 
  In addition, by Theorem~\ref{th:Lpstochint}
  it holds true that 
  \begin{align}
    \label{eq5:est_stochincr}
    \big\| I^{t_{j-1}, t_{j}}_{(r_2,r_1)} \big\|_{L^p(\Omega)}
    &\le C_p^2 h_j
  \end{align}
  for all $r_1, r_2 \in \{1,\ldots,m\}$ with the same constant $C_p$ as above.
  
  It remains to show the $L^p$-estimate of $\Pi_1^j:=h_j f\big(t_{j-1}+\tau_j
  h_j,\Psi_{h}^j(Z,\tau_j) \big)$. The linear growth \eqref{eq:lineargrowth}
  of $f$ implies 
  \begin{align*}
    \|\Pi^j_3\|_{L^p(\Omega;\R^d)}
    \leq \tilde{K}_f \big( 1+\|\Psi_{h}^j(Z,\tau_j) \|_{L^p(\Omega;\R^d)}
    \big) h_j,
  \end{align*}
  where $\Psi_h$, defined in \eqref{eq:PsiM}, can be further estimated through
  the linear growth of both $f$ and $g^r$ as well as 
  Theorem~\ref{th:Lpstochint}: 
  \begin{align*}
    \| \Psi_{h}^j(Z,\tau_j) \|_{L^p(\Omega;\R^d)}
    &\le \|Z\|_{L^p(\Omega;\R^d)} + h_j \|f(t_{j-1},Z)\|_{L^p(\Omega;\R^d)}\\
    &\quad +\sum_{r=1}^m \big\|g^r(t_{j-1},Z ) \big\|_{L^p(\Omega;\R^d)}
    \|I_{(r)}^{t_{j-1},t_{j-1}+\tau_jh_j}\big\|_{L^p(\Omega)}\\
    &\leq \|Z\|_{L^p(\Omega;\R^d)}+\big(1+\|Z\|_{L^p(\Omega;\R^d)}\big)
    (\tilde{K}_f h_j + mC_p \tilde{K}_g h_j^{\frac{1}{2}})<\infty.
  \end{align*}  
  Here the estimate of the 
  increment $I_{(r)}^{t_{j-1},t_{j-1}+\tau_jh_j}$ comes
  from 
  \begin{align}
    \label{eq5:est_incr}
    \begin{split}
      \big\|I^{t_{j-1},t_{j-1}+\tau_j h_j}_{(r)}\big\|_{L^p(\Omega)}
      &= \big\| W^r(t_{j-1} +\tau_j h_j ) - W^r(t_{j-1} ) \big\|_{L^p(\Omega)}\\
      &= \big( \E_\tau\big[ \E_W [ | W^r(t_{j-1} +\tau_j h_j ) - W^r(t_{j-1} )
      |^p ] \big] \big)^{\frac{1}{p}}\\
      &\le \Big(\frac{p(p-1)}{2} \Big)^{\frac{1}{2}} h_j^{\frac{1}{2}} 
      \big( \E_\tau\big[ \tau_j^{\frac{p}{2}} \big] \big)^{\frac{1}{p}} 
      \le C_p h_j^{\frac{1}{2}}
    \end{split}
  \end{align}
  by an application of Theorem~\ref{th:Lpstochint}.
\end{proof}

\begin{definition}
  \label{def:strerr}
  We say that the numerical method \eqref{eq:RandM} \emph{converges with order
  $\beta \in (0,\infty)$ } to the exact solution $X$
  of \eqref{eq:SODE} in the $L^p(\Omega)$-norm  if there 
  exist $p \in [2, \infty)$, $C \in (0,\infty)$, $h_0 \in (0,T)$ such that
  for all temporal grids $\pi_h$ with $|h| \le h_0$ we have
  \begin{align*}
    \big\| \max_{n \in \{0,1, \ldots, N_h\} } | X_h^n - X(t_n)|
    \big\|_{L^p(\Omega)} \le C |h|^{\beta}.
  \end{align*}
  Here $(X_h^n)_{n\in \{0,1,\ldots, N_h\}} \subset L^p(\Omega;\R^d)$ is generated by
  \eqref{eq:RandM} on $\pi_h$.
\end{definition}

Next, we state our main result. The proof is deferred to the end of
Section~\ref{sec:consistency}.

\begin{theorem}
  \label{th:main2}
  Let Assumptions~\ref{as:X0} to \ref{as:g} be satisfied 
  with $p \in [2,\infty)$ and
  $\gamma \in (0,1]$. Then, the drift-randomized
  Milstein method \eqref{eq:RandM}   
  converges with order $\beta = \min(\frac{1}{2} + \gamma, 1)$ to the exact
  solution $X$ of \eqref{eq:SODE} in the $L^p(\Omega)$-norm.
\end{theorem}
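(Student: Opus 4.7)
The plan is to follow the framework of Beyn--Kruse \cite{beyn2010}: combine stochastic bistability of the scheme with consistency of the one-step increment function $\Phi_h^j$ to deduce the global convergence rate. Writing the global error as $E^n := X_h^n - X(t_n)$ and using the telescoping identity
$$E^n = \sum_{j=1}^{n} \bigl[\Phi_h^j(X_h^{j-1},\tau_j) - \Phi_h^j(X(t_{j-1}),\tau_j)\bigr] + \sum_{j=1}^{n} R^j,$$
with $R^j := \Phi_h^j(X(t_{j-1}),\tau_j) - (X(t_j)-X(t_{j-1}))$ the local truncation error, bistability absorbs the first sum via a discrete Gronwall argument while consistency controls the accumulated local errors by $C|h|^{\min(1/2+\gamma,1)}$ in $L^p(\Omega)$.

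For bistability I would prove a one-step Lipschitz estimate of the form
$$\|\Phi_h^j(Z_1,\tau_j) - \Phi_h^j(Z_2,\tau_j)\|_{L^p(\Omega;\R^d)} \le (1 + C h_j)\|Z_1 - Z_2\|_{L^p(\Omega;\R^d)}$$
for $\F_{j-1}^h$-measurable $Z_1, Z_2 \in L^p(\Omega;\R^d)$, handled termwise: the randomized drift piece $h_j f(t_{j-1}+\tau_j h_j, \Psi_h^j(\cdot,\tau_j))$ is $O(h_j)$-Lipschitz by Assumption~\ref{as:f} together with the stability of $\Psi_h^j$ defined in \eqref{eq:PsiM}, while the Milstein stochastic terms are treated with Theorem~\ref{th:Lpstochint} and the bounds in Assumption~\ref{as:g}. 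Iterating this estimate and applying the discrete Burkholder--Davis--Gundy inequality of Theorem~\ref{th:discreteBDG} to the martingale contribution, followed by Lemma~\ref{lem:Gronwall}, should yield
$$\Bigl\|\max_{n \le N_h} |E^n|\Bigr\|_{L^p(\Omega)} \le C \Bigl\|\max_{n \le N_h} \Bigl|\sum_{j=1}^{n} R^j\Bigr|\Bigr\|_{L^p(\Omega)}.$$

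For consistency I would decompose $R^j = R^j_{\mathrm{drift}} + R^j_{\mathrm{diff}}$. The diffusion part is handled by the classical Milstein analysis: expanding $g^r(s,X(s))$ around $(t_{j-1}, X(t_{j-1}))$ via It\=o's formula extracts the second-order term $g^{r_1,r_2}(t_{j-1},X(t_{j-1})) I^{t_{j-1},t_j}_{(r_2,r_1)}$ and leaves a remainder whose $L^p$ norm is of order $h_j^{1/2+\min(1/2+\gamma,1)}$ after integration against $dW^r$, the exponent $\min(1/2+\gamma,1)$ being exactly the temporal H\"older regularity assumed on $g^r$ in Assumption~\ref{as:g}. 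For the drift part I would further split $R^j_{\mathrm{drift}}$ into a correction $h_j[f(t_{j-1}+\tau_j h_j, \Psi_h^j(X(t_{j-1}),\tau_j)) - f(t_{j-1}+\tau_j h_j, X(t_{j-1}+\tau_j h_j))]$, of order $h_j^{3/2}$ by Lipschitz continuity of $f$ in the state variable and an elementary bound on the inner-stage discrepancy $\Psi_h^j(X(t_{j-1}),\tau_j) - X(t_{j-1}+\tau_j h_j)$, plus the randomized-quadrature discrepancy $h_j f(t_{j-1}+\tau_j h_j, X(t_{j-1}+\tau_j h_j)) - \int_{t_{j-1}}^{t_j} f(s,X(s))\diff{s}$.

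The heart of the argument, and the step I expect to be the main obstacle, is the control of the accumulated randomized-quadrature discrepancy. Since $\tau_j$ is independent of $\F^W$, Fubini's theorem gives $\E_\tau[h_j f(t_{j-1}+\tau_j h_j, X(t_{j-1}+\tau_j h_j))] = \int_{t_{j-1}}^{t_j} f(s,X(s))\diff{s}$, so these summands are centered and form a martingale difference sequence with respect to the enlarged filtration $(\F_n^h)_n$. Applying Theorem~\ref{th:discreteBDG} reduces the task to a quadratic-variation bound, and the randomized-quadrature estimates to be developed in Section~\ref{sec:quadrature} provide a per-step $L^p$ size of order $h_j^{1+\gamma}$ for the centered summand, exploiting H\"older continuity of $s \mapsto f(s,X(s))$ in combination with Theorem~\ref{th:Lp_est}. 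Summing in the martingale sense yields the global bound $|h|^{1/2+\gamma}$, the extra $h^{1/2}$ gain beyond the deterministic H\"older rate $h^\gamma$ being the Monte Carlo effect that distinguishes this scheme from the classical Milstein method. Combining the drift and diffusion consistency bounds with the bistability estimate via the discrete Gronwall Lemma~\ref{lem:Gronwall} then delivers the claimed order $\min(1/2+\gamma,1)$ and completes the proof of Theorem~\ref{th:main2}.
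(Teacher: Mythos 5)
Your overall architecture is exactly the paper's: bistability in the sense of \cite{beyn2010} combined with consistency measured in the Spijker norm, with the randomized quadrature rule of Section~\ref{sec:quadrature} (martingale structure in $\tau$, Theorem~\ref{th:discreteBDG}, per-step size $h_j^{1+\nu}$ with $\nu=\min(\gamma,\tfrac12)$) doing the work for the drift. You also correctly identify that the na\"ive one-step Lipschitz bound cannot absorb the stochastic terms and that the martingale contributions must go through Burkholder--Davis--Gundy before Gronwall; note only that your displayed estimate $\|\Phi_h^j(Z_1,\tau_j)-\Phi_h^j(Z_2,\tau_j)\|_{L^p}\le(1+Ch_j)\|Z_1-Z_2\|_{L^p}$ is false for the increment as a whole (the terms $g^r(t_{j-1},\cdot)I^{t_{j-1},t_j}_{(r)}$ contribute $O(h_j^{1/2})$), so it can only be meant for the drift piece, which is how Lemma~\ref{lm:stability} actually organizes the argument.

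There is, however, one step in your consistency analysis that fails quantitatively. You estimate the correction term $h_j\bigl[f(t_{j-1}+\tau_j h_j,\Psi_h^j(X(t_{j-1}),\tau_j))-f(t_{j-1}+\tau_j h_j,X(t_{j-1}+\tau_j h_j))\bigr]$ as $O(h_j^{3/2})$, i.e.\ you bound the inner-stage discrepancy by $O(h_j^{1/2})$ (which is what the ``elementary bound'' via the triangle inequality through $X(t_{j-1})$ gives). These corrections are not martingale differences, so they must be summed with absolute values, and $\sum_{j=1}^{N_h}h_j^{3/2}\le T|h|^{1/2}$: your outline then proves only order $\tfrac12$, no better than Euler--Maruyama, and the theorem's rate $\min(\tfrac12+\gamma,1)$ is lost. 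The repair is that $\Psi_h^j(X(t_{j-1}),\tau_j)$ is a consistent one-step approximation of $X(t_{j-1}+\tau_j h_j)$ to full order $h_j$, not $h_j^{1/2}$: writing the discrepancy as $\int_{t_{j-1}}^{t_{j-1}+\tau_j h_j}(f(s,X(s))-f(t_{j-1},X(t_{j-1})))\diff{s}+\sum_r\int_{t_{j-1}}^{t_{j-1}+\tau_j h_j}(g^r(s,X(s))-g^r(t_{j-1},X(t_{j-1})))\diff{W^r(s)}$ and applying Theorem~\ref{th:Lpstochint} to integrands of size $O(h_j^{1/2})$ yields $O(h_j)$, hence $O(h_j^2)$ per step and $O(|h|)$ in total, which is what Theorem~\ref{th:Consistence} uses. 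A second, smaller objection: for the diffusion residual you propose an It\=o--Taylor expansion of $g^r(s,X(s))$, but under Assumption~\ref{as:g} the maps $g^r$ are only $C^1$ in $x$ with Lipschitz derivative and H\"older in $t$, so It\=o's formula is not available; Lemma~\ref{lm:milstein1} instead extracts the term $g^{r,r_2}(t_{j-1},X(t_{j-1}))I^{t_{j-1},s}_{(r_2)}$ by a mean-value decomposition of $g^r(t_{j-1},\cdot)$ together with insertion of the integral equation \eqref{eq:exactint}, which needs only the stated regularity and produces the same remainder of order $h_j^{\frac12+\min(\frac12+\gamma,1)}$ per step.
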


We remark that the order of convergence $\min(\frac{1}{2} + \gamma, 1)$
is optimal in the following sense: First, recall that
the maximum order of convergence of the classical Milstein method is known to 
be $1$. This has been shown in \cite[Thm.~6.2]{kruse2012} by a
generalization of the well-known example of Clark and Cameron
\cite{clark1980}. Since that example does not contain a drift coefficient
function, the classical Milstein method and our randomized version
\eqref{eq:RandM} coincide in this case.
Therefore, the maximum order of convergence of 
\eqref{eq:RandM} cannot exceed $1$ as well.

Second, as already mentioned in Section~\ref{sec:intro},
in the ODE case
($g^r \equiv 0$ for all $r \in \{1,\ldots,m\}$) the maximum order of
convergence of randomized algorithms is known to 
be equal to $\frac{1}{2} + \gamma$ under Assumption~\ref{as:f}, see
\cite{heinrich2008}. In addition, it is shown in \cite{przybylowicz2014}
that the maximum order of convergence for the approximation of a stochastic
integral with $(\frac{1}{2} + \gamma)$-H\"older continuous integrand can also
not exceed $\frac{1}{2} + \gamma$. Therefore, there exists no (randomized)
algorithm, depending only on finitely many point evaluations of the
coefficients, that converges with a better rate than $\beta =
\min(\frac{1}{2} + \gamma, 1)$ for all $f$ and $g^r$ satisfying
Assumptions~\ref{as:f} and \ref{as:g}. 

We conclude this section with the following convergence result in the almost
sure sense. Its proof follows directly from Theorem~\ref{th:main2} and 
a modified version of \cite[Lemma~2.1]{kloeden2007} found in 
\cite[Lemma~3.3]{kruse2017}. Compare further with \cite{gyoengy1998}. 

\begin{corollary}
  \label{cor:pathwise}
  Let Assumptions~\ref{as:X0} to \ref{as:g} be satisfied 
  with $p \in [2,\infty)$ and $\gamma \in (0,1]$. Let $(\pi_{h^{(m)}})_{m \in
  \N} \subset [0,T]$ be a sequence of temporal grids with corresponding 
  maximum step sizes $|h^{(m)}|$ satisfying $\sum_{m = 1}^\infty |h^{(m)}| <
  \infty$. Then, there exist a random variable $m_0 \colon \Omega \to \N_0$ and
  a measurable set $A \in \F$ with $\P(A) = 1$ such that for all $\omega \in A$
  and $m \ge m_0(\omega)$ we have
  \begin{align*}
    \max_{n \in \{0,1,\ldots,N_{h^{(m)}}\}} \big| X_{h^{(m)}}^n(\omega) 
    - X(t_n, \omega) \big| \le |h^{(m)}|^{\min(\frac{1}{2} + \gamma, 1) -
    \frac{1}{p}}. 
  \end{align*}
\end{corollary}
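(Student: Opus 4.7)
The plan is to derive the pointwise bound from the $L^p$ estimate provided by Theorem~\ref{th:main2} via a Markov inequality plus Borel--Cantelli argument, exactly in the spirit of \cite[Lemma~2.1]{kloeden2007}. Set $\beta := \min(\frac{1}{2}+\gamma,1)$ and, for each $m \in \N$, introduce the non-negative random variable
\begin{align*}
  Y_m := \max_{n \in \{0,1,\ldots,N_{h^{(m)}}\}} \big| X_{h^{(m)}}^n - X(t_n) \big|.
\end{align*}
By Theorem~\ref{th:main2}, there exists $C \in (0,\infty)$, independent of $m$, such that $\|Y_m\|_{L^p(\Omega)} \le C |h^{(m)}|^{\beta}$ for all sufficiently large $m$ (say $m \ge m_1$, so that $|h^{(m)}| \le h_0$).

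The key idea is to choose the threshold level carefully, namely $\varepsilon_m := |h^{(m)}|^{\beta - \frac{1}{p}}$, because this is exactly the level at which Markov's inequality at the $p$-th power yields a bound proportional to $|h^{(m)}|$, which is summable by the hypothesis on the grid sequence. Concretely, for $m \ge m_1$ Markov's inequality gives
\begin{align*}
  \P\big( Y_m > |h^{(m)}|^{\beta - \frac{1}{p}} \big)
  \le \frac{\E[Y_m^p]}{|h^{(m)}|^{p\beta - 1}}
  \le \frac{C^p |h^{(m)}|^{p\beta}}{|h^{(m)}|^{p\beta - 1}}
  = C^p |h^{(m)}|.
\end{align*}
Summing over $m$ and using the assumption $\sum_{m=1}^\infty |h^{(m)}| < \infty$ yields $\sum_{m=m_1}^\infty \P(Y_m > |h^{(m)}|^{\beta - \frac{1}{p}}) < \infty$.

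By the Borel--Cantelli lemma, the set $B := \{ \omega \in \Omega : Y_m(\omega) > |h^{(m)}|^{\beta - \frac{1}{p}} \text{ for infinitely many } m\}$ is a $\P$-null set. Let $A := \Omega \setminus B$ and, for $\omega \in A$, define
\begin{align*}
  m_0(\omega) := \inf\big\{ M \ge m_1 : Y_m(\omega) \le |h^{(m)}|^{\beta - \frac{1}{p}} \text{ for all } m \ge M \big\},
\end{align*}
which is finite on $A$ by construction. Extending $m_0$ measurably by e.g.\ $m_0(\omega) = 0$ on $B$ yields the desired $\N_0$-valued random variable, and for every $\omega \in A$ and $m \ge m_0(\omega)$ the stated pointwise bound $Y_m(\omega) \le |h^{(m)}|^{\beta - \frac{1}{p}}$ holds, which is exactly the claim of the corollary. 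No genuine obstacle appears here beyond bookkeeping: the entire nontrivial work has already been done in establishing the $L^p$-rate in Theorem~\ref{th:main2}; the present argument is a standard pathwise-from-moment upgrade and the only mild subtlety is aligning the exponent loss $\frac{1}{p}$ with the summability hypothesis on $(|h^{(m)}|)_{m \in \N}$.
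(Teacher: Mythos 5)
Your argument is correct and is precisely the route the paper takes: the paper's proof consists of invoking Theorem~\ref{th:main2} together with the Markov--Borel--Cantelli lemma of Kloeden--Neuenkirch type (in the form of \cite[Lemma~3.3]{kruse2017}), which is exactly the computation you carry out explicitly. The choice of threshold $|h^{(m)}|^{\beta-\frac{1}{p}}$ so that the constant is absorbed into the summable probability bound rather than the pathwise estimate is the intended mechanism.
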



\section{A randomized quadrature rule for stochastic processes}
\label{sec:quadrature}
In this section we introduce a randomized quadrature rule for integrals of 
stochastic processes, which is an essential ingredient in the error analysis of
the randomized Milstein method. It is based on a well-known 
variance reduction technique from Monte Carlo integration, the stratified
sampling. In dependence of the temporal regularity of the stochastic process
this technique is known to admit higher order convergence results than
the standard rate $\frac{1}{2}$ usually known for Monte Carlo methods.
Our result is an extension of results from
\cite{haber1966, haber1967} to stochastic processes.
Compare further with \cite{kruse2017} for a more
recent exposition of the deterministic case. 

In the following we consider an arbitrary
stochastic process $Y \colon [0,T]\times \Omega_W \to \R^d$ on 
the probability space $(\Omega_W, \F^W, \P_W)$ satisfying
$\|Y\|_{L^p([0,T]\times\Omega_W;\R^d)}<\infty$ for some $p \in 
[2,\infty)$. Let $\pi_h = \{t_j \, :\, j = 0,1,\ldots,N_h\}
\subset [0,T]$ be an arbitrary temporal grid with associated vector of step
sizes $h = (h_j)_{j = 1}^{N_h}$ as defined in \eqref{eq:stepsizes}. 
Recall that $|h|$ denotes the maximum step size in $\pi_h$.

Then, the goal is to give a numerical approximation
of the random variables
\begin{align*}
  \int_0^{t_n} Y(s) \diff{s} \in L^p(\Omega_W;\R^d)
\end{align*}
for each $n \in \{1,\ldots,N_h\}$. To this end we introduce the following
\emph{randomized Riemann sum approximation} $Q_{\tau,h}^n[Y]$ of
$\int_0^{t_n} Y(s) \diff{s}$ given by 
\begin{align}
    \label{eq:defQtau}
    Q_{\tau,h}^n[Y] := \sum_{j = 1}^n h_j  Y(t_{j-1} +  \tau_j h_j),
    \quad n \in \{1,\ldots,N_h\}, 
\end{align}
where $(\tau_j)_{j \in \N}$ is an independent family of
$\mathcal{U}(0,1)$-distributed random variables on the probability space
$(\Omega_\tau,\F^\tau,\P_\tau)$. In particular, we assume that the family
$(\tau_j)_{j \in \N}$ is independent of the stochastic process $Y$. 
Consequently, $Q_{\tau,h}^n[Y]$ is a random variable on
the product probability space $(\Omega,\F,\P)$ defined in \eqref{eq:Omega}.
For the formulation of the following theorem, we recall from
Section~\ref{sec:notation} that $\E_\tau[ \cdot]$ denotes the expectation with
respect to the measure $\P_\tau$.

\begin{theorem}
  \label{th4:randRiemann}
  For $p \in [2,\infty)$ let $Y \colon [0,T] \times \Omega_W \to \R^d$ 
  be a stochastic process with $Y \in L^p([0,T]\times\Omega_W;\R^d)$.
  Then, for every temporal grid $\pi_h$ and $n \in \{1,\ldots,N_h\}$
  the randomized Riemann sum approximation $Q_{\tau,h}^n[Y] \in
  L^p(\Omega ;\R^d)$ defined in \eqref{eq:defQtau}
  is an unbiased estimator for the integral $\int_0^{t_n} Y(s) \diff{s}$
  in the sense that 
  \begin{equation}
    \label{eq:mean0}
    \E_\tau \big[ Q_{\tau,h}^n[Y] \big]  = \int_{0}^{t_n} Y(s) \diff{s} \in
    L^p(\Omega_W;\R^d). 
  \end{equation}  
  Moreover, it holds true that 
  \begin{align}
    \label{eq4:errRie1}
    \begin{split}
    &\Big\| \max_{n \in \{1,\ldots,N_h\}}
    \Big| Q_{\tau,h}^n[Y] - \int_0^{t_n} Y(s) \diff{s} \Big| \,
    \Big\|_{L^p(\Omega)}
    \\ &\quad 
    \le 2 C_p T^{\frac{p-2}{2p}} \| Y
    \|_{L^p([0,T]\times\Omega_W;\R^d)} |h|^{\frac{1}{2}},
    \end{split}
  \end{align}
  where $C_p$ is a constant only depending on $p \in [2,\infty)$. 
  
  In addition, if $Y \in \mathcal{C}^\gamma([0,T],L^p(\Omega_W;\R^d))$ for some
  $\gamma \in (0,1]$, then we have   
  \begin{align}
    \label{eq4:errRie2}
    \begin{split}
      & \Big\| \max_{n \in \{1,\ldots,N_h\}} \Big|
      Q_{\tau,h}^n[Y] - \int_0^{t_n} Y(s) \diff{s}
      \Big| \, \Big\|_{L^p( \Omega)}
      \\ &\quad 
      \le C_p \sqrt{T} \| Y
      \|_{\mathcal{C}^\gamma([0,T],L^p( \Omega_W;\R^d))}
      |h|^{\frac{1}{2} + \gamma}, 
    \end{split}
  \end{align}
  where $C_p$ is the same constant
  as in \eqref{eq4:errRie1}.
\end{theorem}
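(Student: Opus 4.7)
The plan is to recognize the error process as a discrete-time martingale in an enlarged filtration and then apply the Burkholder--Davis--Gundy inequality from Theorem~\ref{th:discreteBDG}. First I would rewrite the error as a telescoping sum
\begin{align*}
M_n := Q_{\tau,h}^n[Y] - \int_0^{t_n} Y(s)\diff{s} = \sum_{j=1}^{n} E_j,
\qquad E_j := h_j Y(t_{j-1} + \tau_j h_j) - \int_{t_{j-1}}^{t_j} Y(s)\diff{s},
\end{align*}
and introduce the filtration $\mathcal{G}_n := \F^W \otimes \F_n^{\tau}$ on $(\Omega,\F,\P)$. Since $Y$ is $\F^W$-measurable (hence $\mathcal{G}_n$-measurable for every $n$), while $\tau_n$ is independent of $\mathcal{G}_{n-1}$ and $\mathcal{U}(0,1)$-distributed, the change of variable
\begin{align*}
\E[h_j Y(t_{j-1} + \tau_j h_j) \mid \mathcal{G}_{j-1}]
= h_j \int_0^1 Y(t_{j-1} + \sigma h_j) \diff{\sigma}
= \int_{t_{j-1}}^{t_j} Y(s)\diff{s}
\end{align*}
yields both the unbiasedness \eqref{eq:mean0} and the martingale property $\E[E_j \mid \mathcal{G}_{j-1}] = 0$. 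Applying Theorem~\ref{th:discreteBDG} to $(M_n)$ gives
\begin{align*}
\Big\| \max_{n \in \{1,\ldots,N_h\}} |M_n| \Big\|_{L^p(\Omega)}
\le C_p \Big\| \Big(\sum_{j=1}^{N_h} |E_j|^2 \Big)^{1/2} \Big\|_{L^p(\Omega)},
\end{align*}
and since $p \ge 2$ the triangle inequality in $L^{p/2}(\Omega)$ bounds the right-hand side squared by $\sum_{j} \|E_j\|_{L^p(\Omega)}^2$.

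It remains to estimate $\|E_j\|_{L^p(\Omega)}$ in the two regularity regimes. For the general $L^p$ case, I would apply the triangle inequality together with Fubini and the identity
\begin{align*}
\|h_j Y(t_{j-1} + \tau_j h_j)\|_{L^p(\Omega)}^p
= h_j^{p-1} \int_{t_{j-1}}^{t_j} \E_W|Y(s)|^p \diff{s},
\end{align*}
combined with H\"older's inequality on the deterministic integral, yielding
$\|E_j\|_{L^p(\Omega)} \le 2\, h_j^{(p-1)/p}\, \|Y\|_{L^p([t_{j-1},t_j]\times\Omega_W;\R^d)}$. For the H\"older case I would instead write $E_j = \int_{t_{j-1}}^{t_j}(Y(t_{j-1}+\tau_j h_j)-Y(s))\diff{s}$ and use
$\|Y(t_{j-1}+\tau_j h_j) - Y(s)\|_{L^p(\Omega)} \le \|Y\|_{\mathcal{C}^\gamma} h_j^\gamma$ uniformly in $s \in [t_{j-1},t_j]$, so that $\|E_j\|_{L^p(\Omega)} \le \|Y\|_{\mathcal{C}^\gamma} h_j^{1+\gamma}$.

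Summing the squares, the H\"older case is immediate via $\sum_j h_j^{2(1+\gamma)} \le T\,|h|^{1+2\gamma}$, which gives \eqref{eq4:errRie2}. The delicate step is \eqref{eq4:errRie1}: setting $a_j := \int_{t_{j-1}}^{t_j} \E_W|Y(s)|^p \diff{s}$, one must bound $\sum_j h_j^{2(p-1)/p} a_j^{2/p}$ in terms of $T$, $|h|$, and $\sum_j a_j = \|Y\|_{L^p([0,T]\times\Omega_W;\R^d)}^p$. The trick I would use is to split $h_j^{2(p-1)/p} = h_j^{(p-2)/p} \cdot h_j$ and apply H\"older with conjugate exponents $p/(p-2)$ and $p/2$ to the sequences $(h_j^{(p-2)/p})_j$ and $(h_j a_j^{2/p})_j$; this yields
\begin{align*}
\sum_j h_j^{2(p-1)/p} a_j^{2/p}
\le T^{(p-2)/p} \Big( \sum_j h_j^{p/2} a_j \Big)^{2/p}
\le T^{(p-2)/p} |h| \, \|Y\|_{L^p([0,T]\times\Omega_W;\R^d)}^2,
\end{align*}
producing exactly the constants $T^{(p-2)/(2p)} |h|^{1/2}$ of \eqref{eq4:errRie1} after taking square roots. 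The main obstacle is this last weighted-H\"older manipulation, which must be tuned so that the resulting exponents of $T$ and $|h|$ match those in the target estimate; the measurability/martingale setup is conceptually important but straightforward once the product filtration $\mathcal{G}_n$ is identified.
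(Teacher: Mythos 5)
Your proposal is correct and follows essentially the same route as the paper: identify the error as a discrete-time martingale, apply the discrete Burkholder--Davis--Gundy inequality, pass to $\sum_j \|E_j\|_{L^p(\Omega)}^2$ via the triangle inequality in $L^{p/2}(\Omega)$, and finish with the same H\"older manipulations in both regularity regimes. The only (immaterial) difference is that you run the martingale argument on the product space with the enlarged filtration $\F^W \otimes \F_n^\tau$, whereas the paper fixes $\omega \in \Omega_W$, applies BDG pathwise in $L^p(\Omega_\tau)$, and then integrates over $\Omega_W$.
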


\begin{proof}
  Since $Y\in L^p([0,T]\times\Omega_W;\R^d)$ there exists a null
  set $\mathcal{N}_0 \in \F^W$ such that for all $\omega \in \mathcal{N}_0^c =
  \Omega_W \setminus \mathcal{N}_0$ we have $\int_{0}^T
  |Y(s,\omega)|^p\diff{s}<\infty$.  
  Let us therefore fix an arbitrary realization $\omega \in \mathcal{N}_0^c$.
  Then for every $j \in \{1,\ldots,N_h\}$ we obtain
  \begin{align*}
    \int_{t_{j-1}}^{t_j} Y\big(s,\omega\big) \diff{s}
    = h_j \int_{0}^{1} Y \big(t_{j-1} + s h_j,\omega\big) \diff{s}
    = h_j \E_{\tau}[ Y(t_{j-1}+\tau_j h_j,\omega)],
  \end{align*}
  due to $\tau_j \sim \mathcal{U}(0,1)$. This immediately implies
  \eqref{eq:mean0} as well as $h_j Y(t_{j-1}+\tau_j h_j) \in L^p(\Omega;\R^d)$
  for every $j \in \{1,\ldots,N_h\}$.
  
  Next, we define a discrete-time
  error process $(E^n)_{n \in \{0,1,\ldots,N_h\}}$ by setting $E^0 \equiv 0$.
  Further, for every $n \in \{1,\ldots,N_h\}$ we set
  \begin{equation*}
    E^n := Q_{\tau,h}^n[Y] - \int_{0}^{t_n} Y (s) \diff{s}
    = \sum_{j=1}^n \Big( h_j Y( t_{j-1} + \tau_j h_j ) - \int_{t_{j-1}}^{t_j} Y
    (s) \diff{s}\Big),
  \end{equation*}
  which is evidently an $\R^d$-valued random variable on the product
  probability space $(\Omega, \F, \P)$. In particular, $(E^n)_{n \in
  \{0,1,\ldots,N_h\}} \subset L^p(\Omega;\R^d)$. Moreover,
  for each fixed $\omega \in \mathcal{N}_0^c$ we have
  that $E^n(\omega,\cdot) \colon \Omega_\tau \to \R^d$ is
  $\F_n^\tau$-measurable. Further, for
  each pair of $n,m \in \N$ with $0 \le m \le n \le N_h$ it holds true that
  \begin{align*}
    &\E_\tau[ E^n(\omega, \cdot)-E^m(\omega, \cdot) | \F^{\tau}_{m}]\\
    &\quad = \sum_{j=m+1}^n \E_\tau \Big[ h_j Y( t_{j-1} + \tau_j h_j, \omega )
    - \int_{t_{j-1}}^{t_j} Y(s, \omega) \diff{s} \Big| \F^\tau_m \Big]\\
    &\quad = \sum_{j=m+1}^n \E_\tau
    \big[ h_j Y( t_{j-1} + \tau_j h_j ) \big] - 
    \int_{t_m}^{t_n} Y (s, \omega) \diff{s} = 0, 
  \end{align*}
  since $\tau_j$ is independent of $\F^\tau_m$ for every $j > m$.
  Consequently, for every $\omega \in \mathcal{N}_0^c$ 
  the error process $(E^n(\omega, \cdot))_{n \in
  \{0,1,\ldots,N_h\}}$ is an $(\F^\tau_n)_{n \in
  \{0,1,\ldots,N_h\}}$-adapted $L^p(\Omega_\tau;\R^d)$-martingale.
  Thus,  the discrete-time version of the Burkholder-Davis-Gundy inequality 
  (see Theorem \ref{th:discreteBDG}) is applicable and yields  
  \begin{align*}
    \big\| \max_{n \in \{0,1,\ldots,N_h\}} | E^n(\omega, \cdot)|
    \big\|_{L^p(\Omega_\tau)}
    \le C_p \big\| [E(\omega,\cdot)]^{\frac{1}{2}}_{N_h}
    \big\|_{L^p(\Omega_\tau)}
    \quad \text{ for every } \omega \in \mathcal{N}_0^c.
  \end{align*}
  After inserting the quadratic variation $[E(\omega,\cdot)]_{N_h}$, taking the
  $p$-th power and integrating with respect to $\P_W$ we arrive at
  \begin{align}
    \label{eq4:est_E}
    \begin{split}
      &\big\| \max_{n \in \{0,1,\ldots,N_h\}} | E^n|\big\|_{L^p(\Omega)}^p
      = \int_{\Omega_W} \big\| \max_{n \in \{0,1,\ldots,N_h\}} | E^n(\omega,
      \cdot)| \big\|_{L^p(\Omega_\tau)}^p \diff{\P}_W(\omega) \\
      &\quad \le C_p^p \int_{\Omega_W} \Big\| \Big( \sum_{j = 1}^{N_h}
      \Big| \int_{t_{j-1}}^{t_j} \big( Y(t_{j-1} +\tau_j h_j,\omega)
      - Y(s,\omega ) \big) \diff{s}  \Big|^2 \Big)^{\frac{1}{2}} 
      \Big\|_{L^p(\Omega_\tau)}^p \diff{\P}_W(\omega)\\     
      &\quad = C_p^p \Big\| \sum_{j = 1}^{N_h} \Big| \int_{t_{j-1}}^{t_j}  \big(
      Y(t_{j-1}+\tau_j h_j) - Y(s) \big) \diff{s} \Big|^2 \,
      \Big\|_{L^{\frac{p}{2}}(\Omega)}^{\frac{p}{2}}\\
      &\quad \le C_p^p \Big( \sum_{j = 1}^{N_h} \Big\| \int_{t_{j-1}}^{t_j}
       \big| Y(t_{j-1}+\tau_jh_j ) - Y(s) \big| \diff{s}
       \Big\|_{L^p(\Omega)}^2  
      \Big)^{\frac{p}{2}},
    \end{split}
  \end{align}
  where the last step follows from an application of the triangle
  inequality for the $L^{\frac{p}{2}}(\Omega)$-norm.
  Now, after taking the $p$-th root, a further application of the triangle
  inequality yields
  \begin{align}
    \label{eq4:est_Ef}
    \begin{split}
      \big\| \max_{n \in \{1,\ldots,N_h\}} | E^n| \big\|_{L^p(\Omega)}
      &\le C_p \Big( \sum_{j = 1}^{N_h} \Big\| \int_{t_{j-1}}^{t_j}
      | Y(s) | \diff{s} \Big\|^2_{L^p(\Omega)} \Big)^{\frac{1}{2}} \\
      &\quad + C_p \Big( \sum_{j = 1}^{N_h} h_j^2 \big\|
      Y(t_{j-1} +  h_j \tau_j) \big\|_{L^{p}(\Omega;\R^d)}^2
      \Big)^{\frac{1}{2}}.
    \end{split}
  \end{align}
  The first term on the right hand side of \eqref{eq4:est_Ef} is then bounded 
  by an application of H\"older's inequality as follows
  \begin{align}
    \label{eq4:term1}
    \begin{split}
      \Big( \sum_{j = 1}^{N_h} \Big\| \int_{t_{j-1}}^{t_j}
      | Y(s) | \diff{s} \Big\|^2_{L^p(\Omega)} \Big)^{\frac{1}{2}}
      &=\Big( \sum_{j = 1}^{N_h} \Big(\E_W \Big[\Big( \int_{t_{j-1}}^{t_j}
      |Y(s)| \diff{s}\Big)^p\Big]\Big)^{\frac{2}{p}} \Big)^{\frac{1}{2}}\\
      &\leq  \Big(  \sum_{j = 1}^{N_h} h_j^{2-\frac{2}{p}} \Big(
      \int_{t_{j-1}}^{t_j} 
      \E_W [|Y(s)|^p] \diff{s}\Big)^{\frac{2}{p}} \Big)^{\frac{1}{2}}. 
    \end{split}
  \end{align}
  Now, if $p = 2$ we directly obtain the desired estimate
  \begin{align*}
    \Big( \sum_{j = 1}^{N_h} \Big\| \int_{t_{j-1}}^{t_j}
      | Y(s) | \diff{s} \Big\|^2_{L^2(\Omega)} \Big)^{\frac{1}{2}}
      \le |h|^{\frac{1}{2}} \|Y\|_{L^{2}([0,T]\times\Omega_W;\R^d)}.
  \end{align*}
  For $p \in (2,\infty)$ the estimate in \eqref{eq4:term1} is completed 
  by a further application of H\"older's inequality 
  with conjugated exponents $\rho = \frac{p}{2} \in (1,\infty)$ and $\rho' =
  \frac{p}{p-2}$. This yields
  \begin{align}
    \label{eq4:term2}
    \begin{split}
    & \Big(  \sum_{j = 1}^{N_h} h_j^{2-\frac{2}{p}} \Big( \int_{t_{j-1}}^{t_j}
      \E_W [|Y(s)|^p] \diff{s}\Big)^{\frac{2}{p}} \Big)^{\frac{1}{2}}\\
    &\quad \leq  \Big(\sum_{j=1}^{N_h} h_j^{\rho'(2-\frac{2}{p})}
      \Big)^{\frac{1}{2\rho'}} 
    \Big(\sum_{j=1}^{N_h}\int_{t_{j-1}}^{t_j} 
      \E_W [|Y(s)|^p] \diff{s}\Big)^{\frac{1}{p}}\\
    &\quad \leq  T^{\frac{p-2}{2p}} |h|^{\frac{1}{2}}
      \|Y\|_{L^{p}([0,T]\times\Omega_W;\R^d)}
    \end{split}
  \end{align}
  as claimed, since $T^{\frac{1}{2 \rho'}} =
  T^{\frac{p - 2}{2 p}}$ as well as $|h|^{\frac{1}{2}(
  2 - \frac{2}{p}) - \frac{1}{2\rho'}} = |h|^{\frac{1}{2}}$.

  In the same way we obtain an estimate for the second term on
  the right hand side of \eqref{eq4:est_Ef} by additionally taking note 
  of the fact that
  \begin{align}
    \label{eq4:equivalent}
    \begin{split}
      h_j^2 \|Y(t_{j-1}+\tau_jh_j)\|_{L^{p}(\Omega;\R^d)}^2
      &= h_j^{2 - \frac{2}{p}} \big( h_j \E_W \big[ \E_\tau [ |Y(t_{j-1}
      +\tau_j h_j) |^p ] \big] \big)^{\frac{2}{p}}\\ 
      &= h_j^{2 - \frac{2}{p}} \Big( \E_{W} \Big[ \int_{t_{j-1}}^{t_j} |Y(s)|^p
      \diff{s} \Big] \Big)^{\frac{2}{p}}.
    \end{split}
  \end{align}
  Then, one proceeds as in \eqref{eq4:term1} and \eqref{eq4:term2}.
  Altogether, \eqref{eq4:est_E}, \eqref{eq4:term1}, and 
  \eqref{eq4:equivalent} yield
  \begin{align*}
     &\big\| \max_{n \in \{1,\ldots,N_h\}} | E^n|
    \big\|_{L^p(\Omega)} \le 2 C_p T^{\frac{p-2}{2p}} \| Y
    \|_{L^p([0,T]\times\Omega_W;\R^d)} |h|^{\frac{1}{2}}.
  \end{align*}
  This completes the proof of \eqref{eq4:errRie1}.

  Next, if $Y \in \mathcal{C}^\gamma([0,T],L^{p}(\Omega_W;\R^d))$
  we can improve the estimate in \eqref{eq4:est_E} by 
  \begin{align*}
    &\Big\| \int_{t_{j-1}}^{t_j} \big| Y(t_{j-1} +  h_j \tau_j) - Y(s)
    \big| \diff{s}  \Big\|_{L^{p}(\Omega)}\\
    &\quad \le \int_{t_{j-1}}^{t_j} \big(\E_\tau \big[ \E_W \big[ \big| 
    Y(t_{j-1} +  h_j \tau_j) - Y(s) \big|^p \big] \big] \big)^{\frac{1}{p}} 
    \diff{s}\\  
    &\quad \le \| Y \|_{\mathcal{C}^\gamma([0,T],L^{p}(\Omega_W;\R^d))}
    \int_{t_{j-1}}^{t_j} \big(\E_\tau\big[|t_{j-1} + \tau_j  h_j - s|^{\gamma
    p}\big]\big)^{\frac{1}{p}} \diff{s}\\ 
    &\quad \le \| Y \|_{\mathcal{C}^\gamma([0,T],L^{p}(\Omega_W;\R^d))}
    h_j^{1 + \gamma}.  
  \end{align*}
  Thus, inserting this into \eqref{eq4:est_E} gives
  \begin{align*}
    \big\| \max_{n \in \{0,1,\ldots,N_h\}} | E^n| \big\|_{L^p(\Omega)}
    &\le C_p \Big( \sum_{j = 1}^{N_h} \| Y
    \|_{\mathcal{C}^\gamma([0,T],L^{p}(\Omega_W;\R^d))}^2 
    h_j^{2(1 + \gamma)} \Big)^{\frac{1}{2}}\\
    &\le C_p T^{\frac{1}{2}} \| Y
    \|_{\mathcal{C}^\gamma([0,T],L^{p}(\Omega_W;\R^d))} 
    |h|^{\frac{1}{2} + \gamma}.
  \end{align*}
  This completes the proof of \eqref{eq4:errRie2}.
\end{proof}


\section{Stability of the drift-randomized Milstein method}
\label{sec:bistab}

In this section we show that the randomized Milstein method constitutes a
stable numerical method. More precisely, we consider the notion of
\emph{stochastic bistability} that has been introduced in 
\cite{beyn2010, kruse2012, kruse2014b} and is based on 
the abstract framework for discrete approximations developed by
\cite{stummel1973}.

For the introduction of the bistability concept
let $\pi_h$ be an arbitrary temporal grid.
It is then convenient to introduce the space $\mathcal{G}_h^p
:= \mathcal{G}(\pi_h, L^p(\Omega;\R^d))$ of all 
$(\mathcal{F}^h_{n})_{n \in \{0,1,\ldots,N_h\}}$-adapted
and $\R^d$-valued stochastic grid functions, where 
the discrete-time filtration $(\mathcal{F}^h_{n})_{n \in \{0,1,\ldots,N_h\}}$
associated to $\pi_h$ has been defined in \eqref{eq:filtration}. More formally,
we set
\begin{align*}
  \mathcal{G}_h^p := \big\{ (Y_h^n)_{n = 0}^{N_h} \, : \, Y_h^n \in L^p(\Omega,
  \F_n^h, \P; \R^d) \text{ for each
  } n \in \{0,1,\ldots,N_h\} \, \big\}.
\end{align*}
We endow the space $\mathcal{G}_h^p$ with the norm
\begin{align*}
  \big\| Y_h \big\|_{p,\infty} := 
  \big\|\max_{n \in \{0,1,\ldots,N_h\}} |Y_{h}^n| \big\|_{L^p(\Omega)}, 
  \quad Y_h \in \mathcal{G}_h^p.
\end{align*}
Then, the tuple $G_h:= (\mathcal{G}_h^p, \| \cdot \|_{p,\infty})$
becomes a Banach space. Before we continue let us briefly take note of the
fact that the error in Definition~\ref{def:strerr} is in fact measured
in terms of the norm $\| \cdot \|_{p,\infty}$. To be more precise, we have
\begin{align*}
  \big\| X_h - X|_{\pi_h} \big\|_{p,\infty}
  = \big\| \max_{n \in \{0,1,\ldots, N_h\} } | X_h^n - X(t_n)| 
  \big\|_{L^p(\Omega)},
\end{align*}
where $X_h = (X_h^n)_{n = 0}^{N_h} \in \mathcal{G}_h^p$ denotes the stochastic
grid function generated by the numerical scheme \eqref{eq:RandM} on $\pi_h$. In
addition, $X|_{\pi_h}$ denotes the restriction of the exact
solution $X$ of the SDE \eqref{eq:SODE} to the temporal grid points in $\pi_h$.
Theorem~\ref{th:Lp_est} then ensures that indeed $X|_{\pi_h} \in
\mathcal{G}_h^p$, where $p \in [2,\infty)$ is determined by
Assumption~\ref{as:X0}.

The main idea of the bistability concept is now to relate the global error
$X_h - X|_{\pi_h}$ to certain estimates of the local truncation error defined
in \eqref{eq:consistency} below. In order to obtain
optimal error estimates it is however crucial to measure
the local errors in a modified norm. Here, we follow an approach developed in 
\cite{beyn2010, kruse2012} and introduce the so called \emph{stochastic Spijker
norm} on $\mathcal{G}_h^p$ given by
\begin{align}
  \label{eq:Spijker}
  \big\| Z_h \big\|_{S,p} := 
  \| Z_{h}^0 \|_{L^p(\Omega;\R^d)}
  + \Big\| \max_{n\in \{1,2,\ldots,N_h\}} \big| \sum_{j=1}^{n}
  Z_{h}^j \big| \Big\|_{L^p(\Omega)}.
\end{align}
This gives rise to a further Banach space denoted by $G_h^S = (\mathcal{G}_h^p,
\| \cdot\|_{S,p} )$. Note that deterministic versions of this norm are used in
numerical analysis for finite difference methods, see for instance
\cite{spijker1968, spijker1971, stummel1973}. For a more detailed discussion in
the context of SDEs we refer the reader to \cite{beyn2010}.

\begin{remark}
  In the following, we choose the value of the parameter $p \in [2,\infty)$ in
  the definition of the spaces $G_h$ and $G^S_h$ to be the same as in
  Assumption~\ref{as:X0}.

  Moreover, for every fixed temporal grid $\pi_h$ the norms
  $\| \cdot \|_{p,\infty}$ and $\| \cdot \|_{S,p}$ are easily seen to be
  equivalent. However, the norm of the embedding $G_h \hookrightarrow G_h^S$
  grows with the number of steps $N_h$ in $\pi_h$. Thus, the topology generated
  by the Spijker norm in the limit $|h| \to 0$ is stronger in the following
  sense: Let $(\pi_h^{(j)})_{j \in \N}$ be a 
  sequence of temporal grids with 
  $|h^{(j)}| \to 0$ for $j \to \infty$. Then, if $(Z_h^{(j)})_{j \in \N}
  \subset G_{h^{(j)}}^S$ is a sequence of stochastic grid functions with
  \begin{align*}
    \lim_{j \to \infty} \| Z_h^{(j)} \|_{S,p} = 0,
  \end{align*}
  the same holds true with respect to the $\| \cdot
  \|_{p,\infty}$-norm, since $\| Z_h^{(j)} \|_{p,\infty} \le 2 \| Z_h^{(j)}
  \|_{S,p}$ for all $j \in \N$. In general, the converse implication is,
  \emph{wrong}.
\end{remark}

We are now in a position to state the definition of bistability.
  
\begin{definition}
  \label{def:bistability}  
  The numerical method \eqref{eq:RandM} is called (stochastically) 
  \emph{bistable} if there exist constants $C_1, C_2 \in (0,\infty)$
  and $p \in [2, \infty)$ such that for every temporal grid $\pi_h$ 
  with $|h|\leq h_0:=\min(1,T)$ and all $Y_h \in G_h$ it holds true that
  \begin{equation}
    \label{eq:bistability}
    C_1 \| R_h \|_{S,p} \leq \| X_h - Y_h \|_{p,\infty} 
    \leq C_2 \| R_h  \|_{S,p},
  \end{equation} 
  where $X_h \in G_h$ is generated by \eqref{eq:RandM} and $R_h = R_h[Y_h] \in
  G^S_h$ denotes the \emph{residual} of $Y_h$ given by $R_h^0 = Y_h^0 - X_h^0$
  and  
  \begin{equation}
    \label{eq:residual}
    R_h^j := Y_h^j - Y_h^{j-1} - \Phi^j_h(Y^{j-1}_h,\tau_j)
  \end{equation}
  for all $j \in \{1,\ldots,N_h\}$.
\end{definition}

\begin{remark}
  \label{rmk:numericalsolution} 
    (i) The properties of the increment function $\Phi^j_h$ (see
    Lemma~\ref{lem:Phi_prop}) ensure that $R_h = R_h[Y_h]
    \in G^S_h$ if $Y_h \in G_h$. Therefore, the norms in \eqref{eq:bistability}
    are well-defined for every $Y_h \in G_h$.

    (ii) If a numerical method is bistable, then \eqref{eq:bistability}
      says that we can estimate the $\|\cdot\|_{p,\infty}$-difference between
      $X_h$ and an arbitrary stochastic
      grid function in terms of the residual of that
      grid function. Here the residual \eqref{eq:residual}
      measures how well $Y_h \in G_h$ satisfies the recursion
      \eqref{eq:transformM1} defining the numerical method. In addition, 
      the first inequality in \eqref{eq:bistability} shows that the Spijker
      norm yields asymptotically optimal error estimates.
    
      (iii) For the proof of Theorem~\ref{th:main2} we will 
      apply the inequality \eqref{eq:bistability} with $Y_h :=
      X|_{\pi_h}$ in Section~\ref{sec:consistency}. However, the connection
      between Definition~\ref{def:bistability} and the general notion of
      \emph{stability} used in numerical analysis is
      that we also easily estimate the influence of small perturbations to the
      numerical method. For instance, let $\rho_h = ( \rho^n_h )_{n = 0}^{N_h}
      \in G^S_h$ model the inevitable round-off errors occurring during the
      computation of $X_h$ on a computer. That is, instead of $X_h$ we actually
      only observe $\tilde{X}_h = ( \tilde{X}_h^n )_{n = 0}^{N_h}$ in practice,
      where $\tilde{X}_h^0 = X_h^0 + \rho^0_h$ and 
      \begin{align*}
        \tilde{X}_h^j = \tilde{X}_h^{j-1} + \Phi^j_h(\tilde{X}^{j-1}_h,\tau_j)
        + \rho^j_h
      \end{align*}
      for all $j \in \{1,\ldots,N_h\}$. Then, the bistability inequality
      \eqref{eq:bistability} shows that
      \begin{align*}
        C_1 \|\rho_h\|_{S,p} \le \| X_h - \tilde{X}_h \|_{p,\infty} 
        \le C_2 \|\rho_h\|_{S,p}.
      \end{align*}
      For example, for the implementation of an implicit and bistable numerical
      method, it is not necessary to solve exactly the implicit nonlinear
      equations defining the numerical method. An approximation by, for
      instance, Newton's method is sufficient as long as the additional errors
      measured in the Spijker norm are of the same (asymptotic) order as the
      global error. 
\end{remark}

The remainder of this section is devoted to the proof that under
Assumptions~\ref{as:f} and \ref{as:g} the drift-randomized
Milstein method \eqref{eq:RandM} is indeed bistable, see
Theorem~\ref{th:Bistability} further below. For the proof the following lemma
will be useful.

\begin{lemma}
  \label{lm:stability}
  Let Assumptions~\ref{as:f} and \ref{as:g} be satisfied.
  Let $\pi_h$ be an arbitrary temporal grid with $|h|\leq \min(1,T)$. 
  Then, for all stochastic grid functions $Y_h, Z_h \in \mathcal{G}_h^p$, $p
  \in[2,\infty)$, and $k
  \in \{1,\ldots,N_h\}$ it holds true that
  \begin{align}
    \label{eq:stabilityforPhi}
    \begin{split}
      & \Big\|\max_{n \in \{1,\ldots,k\}}
      \Big| \sum_{j=1}^n \big( \Phi^j_h(Y^{j-1}_h,\tau_j) 
      - \Phi^j_h(Z^{j-1}_h,\tau_j) \big) \Big| \, \Big\|_{L^p(\Omega)} \\
      &\quad \le C_3 \Big( \sum_{j=1}^{k} h_j \Big\|
      \max_{i \in \{0, \ldots, j-1\} }
      \big| Y^{i}_h - Z_h^i \big| \Big\|^2_{L^p(\Omega)}
      \Big)^{\frac{1}{2}},
    \end{split}
  \end{align}
  where $C_{3}= K_f \big( 1 + K_f + 2 m K_g C_p \big) \sqrt{T}
  + K_g m C^2_p( 1  + m C_p) $. Furthermore, with $C_4 = C_3 \sqrt{T}$
  \begin{align}
    \label{eq:stabilitysquared}
    \Big\|\max_{n \in \{1,\ldots,N_h\} }
    \Big| \sum_{j=1}^n \big( \Phi^j_h(Y^{j-1}_h,\tau_j) 
    - \Phi^j_h(Z^{j-1}_h,\tau_j) \big) 
    \Big| \Big\|_{L^p(\Omega)}
    \leq  C_4\Big\|Y_h-Z_h\Big\|_{p,\infty}. 
  \end{align}
\end{lemma}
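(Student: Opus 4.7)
The plan is to split the telescoping sum $\sum_{j=1}^n\bigl[\Phi_h^j(Y_h^{j-1},\tau_j) - \Phi_h^j(Z_h^{j-1},\tau_j)\bigr]$ into three pieces according to the three contributions in \eqref{eq:PhiM_expression}: a drift piece $S_n^{(1)}$, a diffusion piece $S_n^{(2)}$, and a Milstein piece $S_n^{(3)}$. The key structural observation is that $S_n^{(2)}$ and $S_n^{(3)}$ are discrete-time $(\F_n^h)$-martingales, since the coefficients $g^r(t_{j-1},Y_h^{j-1})-g^r(t_{j-1},Z_h^{j-1})$ and $g^{r_1,r_2}(t_{j-1},Y_h^{j-1})-g^{r_1,r_2}(t_{j-1},Z_h^{j-1})$ are $\F_{j-1}^h$-measurable while $I_{(r)}^{t_{j-1},t_j}$ and $I_{(r_2,r_1)}^{t_{j-1},t_j}$ are It\=o integrals independent of $\F_{j-1}^h$ and hence mean zero given $\F_{j-1}^h$.

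For the drift piece I would estimate $\|\max_{n\le k}|S_n^{(1)}|\|_{L^p(\Omega)}$ by the $\ell^1$-sum $\sum_{j=1}^{k} h_j\,\|f(t_{j-1}+\tau_j h_j,\Psi_h^j(Y_h^{j-1},\tau_j))-f(t_{j-1}+\tau_j h_j,\Psi_h^j(Z_h^{j-1},\tau_j))\|_{L^p(\Omega)}$, then invoke the spatial Lipschitz bound in Assumption~\ref{as:f} and estimate the $\Psi$-difference by $(1+h_j K_f+mK_g C_p h_j^{1/2})\|Y_h^{j-1}-Z_h^{j-1}\|_{L^p(\Omega)}$. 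Here the crucial point is the independence of the $\F_{j-1}^h$-measurable quantity $Y_h^{j-1}-Z_h^{j-1}$ from the increment $I_{(r)}^{t_{j-1},t_{j-1}+\tau_j h_j}$, combined with the bound \eqref{eq5:est_incr} already used in Lemma~\ref{lem:Phi_prop}. The restriction $|h|\le 1$ converts the parenthesised factor into an absolute constant, and a discrete Cauchy--Schwarz step $\sum_{j=1}^{k} h_j a_j\le \sqrt{T}\,(\sum_{j} h_j a_j^2)^{1/2}$ (valid since $\sum_{j} h_j\le T$) delivers the $\sqrt{T}$-scaled contribution to $C_3$.

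For each martingale piece I would apply the discrete Burkholder--Davis--Gundy inequality (Theorem~\ref{th:discreteBDG}), then bound $\|[S^{(2)}]_k^{1/2}\|_{L^p(\Omega)}$ and $\|[S^{(3)}]_k^{1/2}\|_{L^p(\Omega)}$ via the triangle inequality in the $L^{p/2}$-norm (which requires $p\ge 2$) to reduce them to sums of squared $L^p$-norms of the increments. Each increment factors, by independence, into a Lipschitz-bounded coefficient (using \eqref{eq:Lip_g} for $g^r$ and the Lipschitz estimate on $g^{r_1,r_2}$ in Assumption~\ref{as:g}) times an $\F_{j-1}^h$-independent stochastic increment. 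Theorem~\ref{th:Lpstochint} then furnishes $\|I_{(r)}^{t_{j-1},t_j}\|_{L^p(\Omega)}\le C_p h_j^{1/2}$ and $\|I_{(r_2,r_1)}^{t_{j-1},t_j}\|_{L^p(\Omega)}\le C_p^2 h_j$ as in \eqref{eq5:est_stochincr}, producing the contributions $mK_g C_p^2$ and $m^2 K_g C_p^3$ to $C_3$; for the Milstein piece I use $h_j^2\le h_j$ (again by $|h|\le 1$) to absorb the extra $h_j$ into the weight, and then replace $\|Y_h^{j-1}-Z_h^{j-1}\|_{L^p(\Omega)}$ by the larger $\|\max_{i\le j-1}|Y_h^i-Z_h^i|\|_{L^p(\Omega)}$ to obtain \eqref{eq:stabilityforPhi}.

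The second inequality \eqref{eq:stabilitysquared} then follows immediately by the crude monotonicity bound $\|\max_{i\le j-1}|Y_h^i-Z_h^i|\|_{L^p(\Omega)}\le \|Y_h-Z_h\|_{p,\infty}$ for every $j$, which yields $\sum_{j=1}^{N_h} h_j\|\max_{i\le j-1}|Y_h^i-Z_h^i|\|_{L^p(\Omega)}^2\le T\|Y_h-Z_h\|_{p,\infty}^2$ and hence an extra $\sqrt{T}$ factor, giving $C_4=C_3\sqrt{T}$. I do not expect any single step to be deep: the BDG application is standard, and the Lipschitz/independence algebra is precisely the mechanism already exercised in Lemma~\ref{lem:Phi_prop}. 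The only mild obstacle is bookkeeping, namely ensuring that the Milstein contribution, whose natural estimate carries a power of $h_j^2$, can be merged into the same $\sum_{j} h_j\|\cdot\|^2$ form as the diffusion contribution; this is precisely what $|h|\le \min(1,T)$ is there to guarantee.
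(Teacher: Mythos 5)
Your proposal follows essentially the same route as the paper's proof: the identical three-way splitting of $\Phi^j_h(Y^{j-1}_h,\tau_j)-\Phi^j_h(Z^{j-1}_h,\tau_j)$ into drift, diffusion and Milstein parts, the $\ell^1$/Lipschitz/Cauchy--Schwarz treatment of the drift term via the $\Psi$-difference, the discrete Burkholder--Davis--Gundy inequality plus the $L^{p/2}$-triangle inequality for the two martingale parts, and the absorption of the extra $h_j$ in the Milstein term using $|h|\le 1$. The resulting constants also match, so the argument is correct and coincides with the paper's.
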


\begin{proof}
  Recalling the definitions of $\Phi_h^j$ and $\Psi_h^j$ 
  from \eqref{eq:PhiM_expression} and \eqref{eq:PsiM} we have
  \begin{align*}
    &\Phi^j_h(Y^{j-1}_h,\tau_j) - \Phi^j_h(Z^{j-1}_h, \tau_j)\\
    &\quad = h_j \big( 
    f( t_{j-1} + \tau_j h_j, \Psi_h^j(Y_h^{j-1},\tau_j) )
    - f( t_{j-1} + \tau_j h_j, \Psi_h^j( Z_h^{j-1},\tau_j) ) \big)\\
    &\qquad +  \sum_{r=1}^m \big( g^r(t_{j-1},Y^{j-1}_h) -  
    g^r(t_{j-1},Z^{j-1}_h) \big) I^{t_{j-1},t_j}_{(r)}\\
    &\qquad + \sum_{r_1, r_2 = 1}^{m} \big( g^{r_1,r_2}(t_{j-1}, Y^{j-1}_h) - 
    g^{r_1,r_2}(t_{j-1},Z^{j-1}_h) \big) I_{(r_2,r_1)}^{t_{j-1},t_j}\\
    &\quad =: \Xi_{1}^j+\Xi_{2}^j+\Xi_{3}^j.
  \end{align*}
  We estimate the three terms separately. For the estimate of
  $\Xi_1^j$ in the stochastic Spijker norm we first apply Assumption~\ref{as:f}
  and obtain for every $k \in \{1,\ldots,N_h\}$ 
  \begin{align*}
    \Big\|\max_{n \in \{1,\ldots,k\}} \big|\sum_{j=1}^n\Xi_{1}^j \big|
    \Big\|_{L^p(\Omega)} 
    &\leq \sum_{j=1}^{k} \|\Xi_{1}^j \| _{L^p(\Omega;\R^d)} \\
    &\leq K_f \sum_{j = 1}^k h_j 
    \big\| \Psi_h^j(Y_h^{j-1},\tau_j) - \Psi_h^j( Z_h^{j-1},\tau_j)
    \big\|_{L^p(\Omega;\R^d)}.
  \end{align*}
  In light of Assumption~\ref{as:f}, the Lipschitz continuity \eqref{eq:Lip_g} 
  of $g^r$, and that
  the increment $I^{t_{j-1},t_{j-1}+\tau_j h_j}_{(r)}$ is independent
  of $Y_h^{j-1}$ and $Z_h^{j-1}$ we further have
  \begin{align*}
    &\big\| \Psi_h^j(Y_h^{j-1},\tau_j) - \Psi_h^j( Z_h^{j-1},\tau_j)
    \big\|_{L^p(\Omega;\R^d)}\\
    &\quad \le (1 + K_f h_j) \big\| Y_h^{j-1} - Z_h^{j-1}
    \big\|_{L^p(\Omega;\R^d)}\\ 
    &\qquad + \sum_{r=1}^{m} \big\| g^r (t_{j-1}, Y^{j-1}_h ) - g^r( t_{j-1},
    Z^{j-1}_h ) \big\|_{L^p(\Omega;\R^d)} \big\|I^{t_{j-1},t_{j-1}+\tau_j
    h_j}_{(r)}\big\|_{L^p(\Omega)}\\
    &\quad \le \big( 1 + K_f |h| + m  K_g C_p |h|^{\frac{1}{2}}\big)
    \Big\| \max_{i \in \{0,\ldots,j-1\} }
    \big| Y_h^{i} - Z_h^{i}\big| \Big\|_{L^p(\Omega)},
  \end{align*}
  where the last step follows from (\ref{eq5:est_incr}). After taking squares,
  applying the Cauchy-Schwarz inequality
  and $|h| \le 1$ we arrive at
  \begin{align}
    \label{eq5:estXi1}
    \begin{split}
      &\Big\|\max_{n \in \{1,\ldots,k\}} \big|\sum_{j=1}^n\Xi_{1}^j \big|
      \Big\|_{L^p(\Omega)}^2 \\
      &\quad \le K_f^2 \big( 1 + K_f + m  K_g C_p \big)^2
      T \sum_{j = 1}^k h_j \Big\| \max_{i \in \{0,\ldots,j-1\} }
      \big| Y_h^{i} - Z_h^{i}\big| \Big\|_{L^p(\Omega)}^2. 
    \end{split}
  \end{align}
  For the estimate of $\Xi_{2}$
  first note that $(M^j)_{j = 0}^{N_h}$ defined by $M^0 = 0$ and
  \begin{align*}
    M^n := \sum_{j = 1}^{n} \Xi_{2}^j, \quad \text{ for } n \in
    \{1,\ldots,N_h\},
  \end{align*}
  is a discrete-time martingale with respect to the filtration
  $(\F^W_{t_j}\otimes \F^{\tau}_j)_{j\in \{0,1,\ldots,N_h\}}$.
  Hence, an application of Theorem~\ref{th:discreteBDG}
  gives
  \begin{align*}
    &\Big\|\max_{n \in \{1,\ldots,k\} }
    \big| \sum_{j=1}^n \Xi_{2}^j \big| \Big\|^2_{L^p(\Omega)}
    = \Big\|\max_{n \in \{1,\ldots,k\} } \big| M^n \big|
    \Big\|^2_{L^p(\Omega)} 
    \le C_p^2 \big\| [M]^{\frac{1}{2}} \big\|_{L^p(\Omega)}^2. 
  \end{align*}
  After inserting the quadratic variation of $M$ we therefore 
  obtain the estimate 
  \begin{align*}
    &\Big\|\max_{n \in \{1,\ldots,k\} }
    \big| \sum_{j=1}^n \Xi_{2}^j \big| \Big\|^2_{L^p(\Omega)}
    \le C_p^2 \Big\| \Big( \sum_{j = 1}^k \big| \Xi_{2}^j \big|^2
    \Big)^{\frac{1}{2}} \Big\|^2_{L^p(\Omega)} \\
    &\quad = C_p^2 \Big\| \sum_{j = 1}^k \Big| \sum_{r=1}^m 
    \big( g^r(t_{j-1},Y^{j-1}_h) - g^r(t_{j-1},Z^{j-1}_h) \big)
    I^{t_{j-1},t_j}_{(r)} \Big|^2 \Big\|_{L^{\frac{p}{2}}(\Omega)}\\
    &\quad \le C_p^2 \sum_{j = 1}^k \Big\| \sum_{r=1}^m
    \big| g^r(t_{j-1},Y^{j-1}_h) - g^r(t_{j-1},Z^{j-1}_h) \big|
    |I^{t_{j-1},t_j}_{(r)}| \Big\|_{L^{p}(\Omega)}^2.
  \end{align*}
  Making again use of the Lipschitz continuity \eqref{eq:Lip_g} 
  of $g^r$ and of the independence 
  of the increments $I^{t_{j-1},t_j}_{(r)}$ as well as its estimate
  \eqref{eq5:est_incr} finally yields
  \begin{align}
    \label{eq5:estXi2}
    \begin{split}
      \Big\|\max_{n \in \{1,\ldots,k\} }
      \big| \sum_{j=1}^n \Xi_{2}^j \big| \Big\|^2_{L^p(\Omega)} 
      &\le m K_g^2 C_p^2 \sum_{j = 1}^k \sum_{r=1}^m 
      \big\| Y^{j-1}_h - Z^{j-1}_h \big\|^2_{L^{p}(\Omega)}
      \big\|I^{t_{j-1},t_{j}}_{(r)}\big\|_{L^p(\Omega)}^2 \\
      &\le  m^2 K_g^2 C_p^4 \sum_{j=1}^{k}h_j 
      \Big\|\max_{i \in \{0,\ldots,j-1\} } 
      \big| Y^{i}_h - Z^{i}_h \big|\Big\|_{L^p(\Omega)}^2.
    \end{split}
  \end{align}
  The remaining term $\Xi_3$ is estimated analogously, since
  the iterated stochastic integrals $I_{(r_2,r_1)}^{t_{j-1},t_j}$ are also
  independent of $Y_h^{j-1}$, $Z_h^{j-1}$. By estimate
  (\ref{eq5:est_stochincr}) we obtain  
  \begin{align}
    \label{eq5:estXi3}
    \begin{split}
      \Big\| \max_{n \in \{1,\ldots,k\} } 
      \big| \sum_{j=1}^n \Xi_{3}^j \big| \Big\|^2_{L^p(\Omega)}
      \leq m^4 K_g^2 C_p^6 
      \sum_{j=1}^{k} h_j \Big\|\max_{i \in \{0,\ldots, j-1\} } 
      \big| Y^{i}_h - Z^{i}_h \big| \Big\|_{L^p(\Omega)}^2.     
    \end{split}
  \end{align}
  Combining the estimates \eqref{eq5:estXi1}, \eqref{eq5:estXi2}, and
  \eqref{eq5:estXi3}, completes the proof of \eqref{eq:stabilityforPhi}.

  Finally, the inequality \eqref{eq:stabilitysquared} is
  easily deduced from \eqref{eq:stabilityforPhi}. 
\end{proof}

\begin{theorem}
  \label{th:Bistability}
  Under Assumptions \ref{as:X0} to \ref{as:g} with $p \in [2,\infty)$
  the drift-randomized Milstein method \eqref{eq:RandM} is bistable with
  stability constants $C_1=\frac{1}{3+C_4}$ and $C_2=\sqrt{2}\ee^{C_3^2 T}$,
  where $C_3$ and $C_4$ are defined in Lemma~\ref{lm:stability}.
\end{theorem}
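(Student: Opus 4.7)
My plan is to establish the two inequalities in \eqref{eq:bistability} separately, using a telescoping identity together with Lemma~\ref{lm:stability} as the main tool, and closing the estimate for the upper bound via the discrete Gronwall inequality (Lemma~\ref{lem:Gronwall}).

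\textbf{Upper bound ($C_2$).} Starting from the defining recursions for $X_h$ and from \eqref{eq:residual}, I telescope $Y_h^n - X_h^n$ as
\begin{align*}
Y_h^n - X_h^n = R_h^0 + \sum_{j=1}^{n} R_h^j + \sum_{j=1}^{n} \bigl( \Phi_h^j(Y_h^{j-1},\tau_j) - \Phi_h^j(X_h^{j-1},\tau_j) \bigr).
\end{align*}
Taking absolute values, then the maximum over $n \in \{0,1,\ldots,k\}$, and finally the $L^p(\Omega)$-norm yields
\begin{align*}
\bigl\| \max_{n \le k} | Y_h^n - X_h^n | \bigr\|_{L^p(\Omega)} \le \|R_h\|_{S,p} + C_3 \Bigl( \sum_{j=1}^{k} h_j \bigl\| \max_{i \le j-1} |Y_h^i - X_h^i| \bigr\|_{L^p(\Omega)}^2 \Bigr)^{1/2},
\end{align*}
where the last term is controlled by inequality \eqref{eq:stabilityforPhi} of Lemma~\ref{lm:stability}. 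Setting $u_k := \bigl\| \max_{n \le k} | Y_h^n - X_h^n | \bigr\|_{L^p(\Omega)}^2$ and squaring the above using $(a+b)^2 \le 2a^2 + 2b^2$, I obtain
\begin{align*}
u_k \le 2 \|R_h\|_{S,p}^2 + 2 C_3^2 \sum_{j=1}^{k} h_j \, u_{j-1}.
\end{align*}
The discrete Gronwall inequality (Lemma~\ref{lem:Gronwall}) then gives $u_{N_h} \le 2 \|R_h\|_{S,p}^2 \exp(2 C_3^2 T)$, and taking square roots produces $\|X_h - Y_h\|_{p,\infty} \le \sqrt{2} \, \ee^{C_3^2 T} \|R_h\|_{S,p}$, which is the claimed upper bound.

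\textbf{Lower bound ($C_1$).} Rearranging the same telescoping identity I get, for each $n$,
\begin{align*}
\sum_{j=1}^{n} R_h^j = (Y_h^n - X_h^n) - R_h^0 - \sum_{j=1}^{n} \bigl( \Phi_h^j(Y_h^{j-1},\tau_j) - \Phi_h^j(X_h^{j-1},\tau_j) \bigr).
\end{align*}
Taking the maximum over $n$, applying the $L^p$-norm and the triangle inequality, and then bounding the Lipschitz-type term by \eqref{eq:stabilitysquared} from Lemma~\ref{lm:stability}, I arrive at
\begin{align*}
\bigl\| \max_{n \le N_h} \bigl| \sum_{j=1}^{n} R_h^j \bigr| \bigr\|_{L^p(\Omega)} \le \|X_h - Y_h\|_{p,\infty} + \|R_h^0\|_{L^p(\Omega;\R^d)} + C_4 \, \|X_h - Y_h\|_{p,\infty}.
\end{align*}
Adding $\|R_h^0\|_{L^p(\Omega;\R^d)}$ to both sides, using the definition \eqref{eq:Spijker} of the Spijker norm, and noting that $\|R_h^0\|_{L^p(\Omega;\R^d)} = \|Y_h^0 - X_h^0\|_{L^p(\Omega;\R^d)} \le \|X_h - Y_h\|_{p,\infty}$, I conclude $\|R_h\|_{S,p} \le (3 + C_4) \|X_h - Y_h\|_{p,\infty}$, which is the lower bound with $C_1 = \frac{1}{3+C_4}$.

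\textbf{Expected difficulty.} The lower bound is essentially algebraic once the telescoping identity is in place. The only delicate point for the upper bound is organizing the estimate so that the discrete Gronwall inequality can be applied; this requires passing through squares to turn the $L^p$-norm of a sum of squared increments produced by Lemma~\ref{lm:stability} into an additive bound for $u_k$. No further regularity of $f$, $g^r$, or the residual is needed beyond what is already used in Lemma~\ref{lm:stability}, so the proof reduces to carefully chaining these estimates.
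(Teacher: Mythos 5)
Your proposal is correct and follows essentially the same route as the paper's proof: the same telescoping identity for $\sum_{j=1}^n R_h^j$, the bound $(3+C_4)\|X_h-Y_h\|_{p,\infty}$ for the Spijker norm via \eqref{eq:stabilitysquared}, and the upper bound via squaring, \eqref{eq:stabilityforPhi}, and the discrete Gronwall inequality with $\sum_j h_j = T$. The constants $C_1=\frac{1}{3+C_4}$ and $C_2=\sqrt{2}\,\ee^{C_3^2 T}$ come out exactly as in the paper.
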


\begin{proof}
  Let $Y_h \in G_h$ be arbitrary. By recalling the definition of the residual
  $R_h = R_h[Y_h] \in G^S_h$ from \eqref{eq:residual} we get for every $n \in
  \{1,\ldots,N_h\}$
  \begin{align*}
      \sum_{j=1}^n  R_h^j
      &= \sum_{j=1}^n \big( Y_h^j - Y_h^{j-1} - \Phi^j_h( Y^{j-1}_h,\tau_j)
      \big)
      = Y_h^n - Y_h^0  - \sum_{j = 1}^n \Phi^j_h( Y^{j-1}_h,\tau_j). 
  \end{align*}
  Due to \eqref{eq:transformM1} we further have
  \begin{align*}
    X_h^n - X_h^0 - \sum_{j = 1}^n \Phi^j_h( X^{j-1}_h,\tau_j) = 0.
  \end{align*}
  Therefore, by a telescopic sum argument we obtain that
  \begin{align}
    \label{eq4:telescopic}
    \sum_{j=1}^n  R_h^j = \big( Y_h^n - X_h^n \big) - \big( Y_h^0 - X_h^0
    \big) - \sum_{j = 1}^n \big( \Phi^j_h( Y^{j-1}_h,\tau_j) 
    - \Phi^j_h( X^{j-1}_h,\tau_j) \big) .
  \end{align}
  Inserting this into the Spijker norm of the residual yields
  \begin{align*}
    \| R_h \|_{S,p}
    &= \| R_{h}^0 \|_{L^p(\Omega;\R^d)} + \Big\|\max_{n \in \{1,\ldots, N_h\} }
    \Big| \sum_{j=1}^n R^j_h \Big|\, \Big\|_{L^p(\Omega)}\\
    &\le 2 \| X_h^0 - Y_h^0 \|_{L^p(\Omega;\R^d)} 
    + \Big\| \max_{n \in \{1, \ldots, N_h\} }
    \big| X_h^n - Y_h^n \big| \Big\|_{L^p(\Omega)}\\
    &\quad + \Big\|\max_{n \in \{1,\ldots, N_h\} }
    \Big| \sum_{j = 1}^n \big( \Phi^j_h( X^{j-1}_h,\tau_j) 
    - \Phi^j_h( Y^{j-1}_h,\tau_j) \big) \Big| \, \Big\|_{L^p(\Omega)}\\
    &\le (3 + C_4) \big\| X_h - Y_h \big\|_{p,\infty},
  \end{align*}
  where the last step follows from an application of
  \eqref{eq:stabilitysquared}. Thus we have $C_1=\frac{1}{3+C_4}$. 

  On the other hand, by rearranging \eqref{eq4:telescopic} 
  the distance $|X_h^n-Y_h^n|$ can be represented for every $n \in
  \{1,\ldots,N_h\}$ by
  \begin{align*}
    |X_h^n-Y_h^n|
    &\le \Big| \sum_{j=1}^{n} \big( \Phi(X_h^{j-1},\tau_j) - 
    \Phi(Y_h^{j-1},\tau_j) \big) \Big| + |R_h^0| + 
    \Big|\sum_{j=1}^{n} R_h^j \Big|.
  \end{align*}
  Therefore, after taking the maximum over $n \in \{0,1,\ldots,k\}$
  with arbitrary $k \in \{1,\ldots,N_h\}$, applications of the
  squared $L^p(\Omega)$-norm and Lemma~\ref{lm:stability} then yield
  \begin{align*}
    &\big\| \max_{n \in \{0,1,\ldots,k\}} 
    |X_h^n - Y_h^n| \big\|_{L^p(\Omega)}^2 \\
    &\quad \le 2 \Big\| \max_{n \in \{1,\ldots,k\} }
    \Big| \sum_{j=1}^{n}\big(\Phi(X_h^{j-1},\tau_j)
    -\Phi(Y_h^{j-1},\tau_j)\big) \Big| \, \Big\|_{L^p(\Omega)}^2\\
    &\qquad + 2 \Big( \| R_h^0 \|_{L^p(\Omega;\R^d)}
    + \Big\|\max_{n\in \{1,\ldots,k\} }
    \Big| \sum_{j=1}^{n} R_h^j \Big| \, \Big\|_{L^p(\Omega)} \Big)^2\\ 
    &\quad \le 2 C_3^2 \sum_{j=1}^{k} h_j \Big\| 
    \max_{n\in \{0,\ldots,j-1\}} 
    \big| X^{n}_h - Y_h^n \big| \Big\|^2_{L^p(\Omega)}
    +2\|R_h\|_{S,p}^2. 
  \end{align*}
Now an application of
the discrete Gronwall inequality (Lemma~\ref{lem:Gronwall}) gives 
\begin{align*}
  \big\|\max_{n\in \{0,\ldots,N_h\}}|X_h^n-Y_h^n|\big\|_{L^p(\Omega)}^2
  \le 2 \| R_h\|_{S,p}^2 \exp\Big( 2 C_3^2 \sum_{j=1}^{N_h} h_j \Big), 
\end{align*}
where we can use the fact that $\sum_{j=1}^{N_h}h_j=T$. In total, we obtain
that 
\begin{align*}
\| X_h - Y_h \|_{p,\infty} \leq C_2 \| R_h\|_{S,p}, 
\end{align*}
with $C_2=\sqrt{2}\ee^{C_3^2 T}$. 
\end{proof}


\section{Consistency and convergence of the randomized Milstein method}
\label{sec:consistency}

In this section we show that the drift-randomized Milstein method
\eqref{eq:RandM} is strongly convergent of order $\min(\frac{1}{2} + \gamma,
1)$ as asserted in Theorem~\ref{th:main2}. To this end we first show that the
numerical method is consistent with the SDE \eqref{eq:SODE} in the following
sense. For the formulation of Definition~\ref{def:consistency} recall the
definitions of the Spijker norm $\| \cdot \|_{S,p}$ in \eqref{eq:Spijker} and
of the residual $R_h[Y_h]$ of a grid function $Y_h \in G_h$ in
\eqref{eq:residual}. 

\begin{definition}
  \label{def:consistency}
  The numerical method \eqref{eq:RandM} is called \emph{consistent of order
  $\beta \in (0,\infty)$} with the SDE \eqref{eq:SODE} 
  if there exist constants $C \in (0,\infty)$ and $p \in [2,\infty)$ such
  that for every temporal grid $\pi_h$ with $|h|\leq \min(1,T)$ we have
  \begin{equation}
    \label{eq:consistency}
    \big\| R_h[ X|_{\pi_h} ] \big\|_{S,p} \le C |h|^{\beta}
  \end{equation}
  where $X|_{\pi_h}$ is the restriction of the exact solution of
  \eqref{eq:SODE} to the temporal grid $\pi_h$. 
\end{definition}

Below we will show that the drift-randomized Milstein method
\eqref{eq:RandM} is consistent of order $\beta = \min(\frac{1}{2} + \gamma,
1)$ under Assumptions~\ref{as:X0} to \ref{as:g}. For this we first present some
estimates for the diffusion term.

\begin{lemma} 
  \label{lm:milstein1}
  Let Assumptions~\ref{as:X0} to \ref{as:g} be satisfied with
  $p \in [2,\infty)$ and $\gamma \in (0,1]$. Let $\pi_h$ be an arbitrary
  temporal grid with $|h|\leq \min(1,T)$. 
  For each $r\in\{1,2,\ldots,m\}$, $j \in \{1,\ldots,N_h\}$ let us
  denote by $\Gamma^j_{(r)}$ the following expression 
  \begin{align*}
    \Gamma^j_{(r)} 
    &= \int_{t_{j-1}}^{t_j} g^r\big(s,X(s)\big) \diff{W^r(s)} - 
    g^r\big(t_{j-1}, X(t_{j-1}) \big) I_{(r)}^{t_{j-1},t_j} \\
    &\qquad - \sum_{r_2=1}^m g^{r,r_2} \big(t_{j-1},X(t_{j-1})\big) 
    I_{(r_2,r)}^{t_{j-1},t_j}.
  \end{align*} 
  Then there exists $C \in (0,\infty)$ only depending 
  on $T$, $p$, $m$, $K_g$, and $\tilde{K}_f$ such that 
  \begin{equation*}
    \Big\|\max_{n \in \{1,\ldots,N_h\} }\Big|
    \sum_{j=1}^{n}\sum_{r=1}^m  \Gamma^j_{(r)} \Big| \Big\|_{L^p(\Omega_W)} 
    \leq C \big( 1 + \| X \|_{ 
    \mathcal{C}^{\frac{1}{2}}([0,T]; L^{2p}(\Omega_W;\R^d))}^2 \big) 
    |h|^{\min(\frac{1}{2} + \gamma, 1)}. 
  \end{equation*}
\end{lemma}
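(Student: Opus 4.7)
The plan is to exploit that $\sum_{j=1}^n \sum_r \Gamma^j_{(r)}$ is a discrete-time martingale with respect to $(\F^W_{t_n})$, since each $\Gamma^j_{(r)}$ is an It\^o integral over $[t_{j-1},t_j]$ minus $\F^W_{t_{j-1}}$-measurable multiples of martingale increments. Therefore I would first apply Theorem \ref{th:discreteBDG} to reduce the left-hand side to $C_p \bigl(\sum_{j=1}^{N_h} \|\sum_r \Gamma^j_{(r)}\|_{L^p(\Omega_W)}^2\bigr)^{1/2}$ (after using Minkowski's inequality in the $L^{p/2}$-norm on the quadratic variation).

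The heart of the proof is a local decomposition of $\Gamma^j_{(r)}$. Using $\int_{t_{j-1}}^{t_j} g^r(t_{j-1},X(t_{j-1}))\,\mathrm{d}W^r(s) = g^r(t_{j-1},X(t_{j-1}))I_{(r)}^{t_{j-1},t_j}$, one writes $\Gamma^j_{(r)}$ as a single It\^o integral of an integrand $H^j_{(r)}(s)$ that I would split into four pieces. First, a pure temporal-Hölder piece $g^r(s,X(s))-g^r(t_{j-1},X(s))$, controlled by Assumption \ref{as:g} with exponent $\min(\tfrac12+\gamma,1)$. Second, applying the fundamental theorem of calculus to $g^r(t_{j-1},\cdot)$ and adding/subtracting $\frac{\partial g^r}{\partial x}(t_{j-1},X(t_{j-1}))$ gives a quadratic term bounded by $K_g|X(s)-X(t_{j-1})|^2$. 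The linear part, equal to $\frac{\partial g^r}{\partial x}(t_{j-1},X(t_{j-1}))(X(s)-X(t_{j-1}))$, I would expand via the SDE \eqref{eq:exactint} for $X(s)-X(t_{j-1})$; in the diffusion contribution I would freeze $g^{r_2}$ at $(t_{j-1},X(t_{j-1}))$ by adding and subtracting, producing exactly $\sum_{r_2} g^{r,r_2}(t_{j-1},X(t_{j-1}))(W^{r_2}(s)-W^{r_2}(t_{j-1}))$, whose integral against $\diff{W^r(s)}$ is the iterated integral that cancels against the Milstein correction inside $\Gamma^j_{(r)}$. What remains is a drift piece of order $h_j$ and a stochastic remainder piece whose integrand is of order $h_j^{1/2}$ in $L^p(\Omega_W)$ by a further application of Theorem \ref{th:Lpstochint} combined with the Lipschitz/Hölder properties of $g^{r_2}$ and \eqref{eq:Lp2}.

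With this decomposition in hand, Theorem \ref{th:Lpstochint} yields
\begin{align*}
  \bigl\|\Gamma^j_{(r)}\bigr\|_{L^p(\Omega_W;\R^d)}
  \le C\bigl(1+\|X\|_{\mathcal{C}^{1/2}([0,T];L^{2p}(\Omega_W;\R^d))}^2\bigr)
  \,h_j^{\min(1+\gamma,\,3/2)},
\end{align*}
where the square in the $\mathcal{C}^{1/2}$-norm arises from the quadratic term, estimated through Hölder's inequality as $\|A_2(s)\|_{L^p}\le K_g\|X(s)-X(t_{j-1})\|_{L^{2p}}^2$. Substituting this bound into the BDG estimate gives
\begin{align*}
  \Bigl(\sum_{j=1}^{N_h}\Bigl\|\sum_{r=1}^m\Gamma^j_{(r)}\Bigr\|_{L^p(\Omega_W)}^2\Bigr)^{1/2}
  \le C m\bigl(1+\|X\|_{\mathcal{C}^{1/2}}^2\bigr)\Bigl(\sum_{j=1}^{N_h} h_j^{2\min(1+\gamma,3/2)}\Bigr)^{1/2},
\end{align*}
and since $2\min(1+\gamma,3/2)\ge 1$, the last sum is bounded by $T\,|h|^{2\min(1/2+\gamma,1)}$, which is exactly the claim.

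The main obstacle is the bookkeeping of the expansion/cancellation around the Milstein term: one must verify that the algebraic manipulation using only a $C^1$ drift/diffusion (no second derivatives available) produces all residual integrands whose $L^p(\Omega_W)$-norms are at least $O(h_j^{\min(1/2+\gamma,1)})$. In particular, since Assumption \ref{as:g} only supplies Lipschitz continuity of $\frac{\partial g^r}{\partial x}$, one cannot apply an It\^o-Taylor expansion to $g^r(s,X(s))$; the fundamental-theorem-of-calculus splitting is the non-trivial substitute, and ensuring each remainder is estimated in the correct norm (using $\|X_0\|_{L^{2p}}$ via Theorem \ref{th:Lp_est}) is where care is required.
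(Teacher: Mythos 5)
Your proposal is correct and follows essentially the same route as the paper: the martingale/BDG reduction to $\bigl(\sum_j\|\Gamma^j_{(r)}\|_{L^p}^2\bigr)^{1/2}$, the representation of $\Gamma^j_{(r)}$ as a single stochastic integral, and the three-way splitting of its integrand into a temporal H\"older piece, a quadratic remainder controlled by the Lipschitz continuity of $\partial g^r/\partial x$ (estimated in $L^{2p}$, whence the squared $\mathcal{C}^{1/2}$-norm), and a linear piece that cancels against the Milstein correction after inserting the integral equation for $X(s)-X(t_{j-1})$. The resulting local rate $h_j^{\min(1/2+\gamma,1)+1/2}$ and the final summation argument coincide with the paper's.
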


\begin{proof}
  For each fixed $r \in \{1,\ldots,m\}$ we can write
  \begin{align*}
    \Gamma^j_{(r)} = \int_{t_{j-1}}^{t_j} G^r(s) \diff{W^r(s)},
  \end{align*}
  with integrand $G^r \colon [0,T] \times \Omega_W \to \R^d$ defined by $G^r(0)
  \equiv 0$ and for each $j \in \{1,\ldots,N_h\}$ and $s \in (t_{j-1}, t_j]$ by
  \begin{align*}
    G^r(s) := g^r(s,X(s)) - g^r( t_{j-1}, X(t_{j-1}))
    - \sum_{r_2 = 1}^m g^{r,r_2}( t_{j-1}, X(t_{j-1}) ) I_{(r_2)}^{t_{j-1},s}.
  \end{align*}
  From this it follows directly that $G^r$ is predictable. The linear growth
  conditions on $g^r$ and $g^{r,r_2}$ together with Theorem~\ref{th:Lp_est} 
  also ensure the integrability of $G^r$.
  Therefore, $\Gamma^j_{(r)}$ is a well-defined stochastic integral.
  Consequently, the discrete-time process $n \mapsto \sum_{j = 1}^n
  \Gamma^j_{(r)} \in L^p(\Omega_W;\R^d)$
  is a martingale with respect to the filtration
  $(\F^W_{t_n} )_{n \in \{0,1,\ldots,N_h\}}$.
  Hence, the Burkholder-Davis-Gundy inequality (Theorem~\ref{th:discreteBDG})
  is applicable and we obtain 
  \begin{align}
    \label{eq6:step1}
    \begin{split}
      &\Big\| \max_{n \in \{1,\ldots,N_h\} } \big| \sum_{j=1}^{n}
      \sum_{r = 1}^m \Gamma^j_{(r)} \big|\Big\|_{L^p(\Omega_W)}
      \le C_p \sum_{r = 1}^m \Big\| \Big( \sum_{j = 1}^{N_h} \big|
      \Gamma^j_{(r)} \big|^2 \Big)^{\frac{1}{2}} \Big\|_{L^p(\Omega_W)}\\
      &\quad = C_p \sum_{r = 1}^m \Big\| \sum_{j = 1}^{N_h} 
      \big| \Gamma^j_{(r)} \big|^2
      \Big\|_{L^{\frac{p}{2}}(\Omega_W)}^{\frac{1}{2}} 
      \le C_p \sum_{r = 1}^m \Big( \sum_{j=1}^{N_h} \big\| \Gamma^j_{(r)}
      \big\|_{L^p(\Omega_W;\R^d)}^2 \Big)^{\frac{1}{2}}.
    \end{split}
  \end{align}
  Moreover, an application of Theorem~\ref{th:Lpstochint} yields
  \begin{align}
    \label{eq:estGammar}
    \big\| \Gamma^j_{(r)} \big\|_{L^p(\Omega_W;\R^d)}
    &\le C_p h_j^{\frac{p - 2}{2p}} \| G^r \|_{L^p([t_{j-1}, t_j] \times
    \Omega_W;\R^d)}.
  \end{align}
  Thus, it remains to give an estimate for $\| G^r \|_{L^p([t_{j-1}, t_j] \times
  \Omega_W;\R^d)}$. To this end we add and subtract several terms and obtain
  for each $j \in \{1,\ldots,N_h\}$ and $s \in (t_{j-1}, t_j]$ 
  \begin{align*}
    &G^r(s) = \big( g^r(s,X(s)) - g^r(t_{j-1}, X(s)) \big) \\
    &\quad + \Big( g^r(t_{j-1}, X(s)) - g^r(t_{j-1}, X(t_{j-1})) -
    \frac{\partial g^r}{\partial x}(t_{j-1}, X(t_{j-1})) 
    \big( X(s) - X(t_{j - 1}) \big) \Big)\\
    &\quad + \Big( \frac{\partial g^r}{\partial x}(t_{j-1}, X(t_{j-1})) 
    \big( X(s) - X(t_{j - 1}) \big) 
    - \sum_{r_2 = 1}^m g^{r,r_2}( t_{j-1}, X(t_{j-1}) ) I_{(r_2)}^{t_{j-1},s}
    \Big)\\
    &\; =: D_1^r(s) + D_2^r(s) + D_3^r(s).
  \end{align*}
  We estimate the three terms separately. The estimate for the first term
  follows at once from Assumption~\ref{as:g}. In fact, we have
  \begin{align}
    \label{eq:estD1}
    \begin{split}
      \| D^r_1 \|_{L^p([t_{j-1}, t_j] \times \Omega_W;\R^d)}
      &\le K_g \Big( 1 + \big\| \sup_{t \in [0,T]} |X(t)|
      \big\|_{L^p(\Omega_W)}  \Big) 
      h_j^{ \min(\frac{1}{2} + \gamma,1) + \frac{1}{p} }.
    \end{split}
  \end{align}
  For the estimate of the term $D_2^r$ we first apply the mean-value theorem
  and obtain
  \begin{align*}
    D^r_2(s) &= \int_0^1 \Big( \frac{\partial g^r}{\partial x}\big( t_{j-1}, 
    X(t_{j-1}) + \rho ( X(s) - X(t_{j - 1}) ) \big) 
    - \frac{\partial g^r}{\partial x}(t_{j-1}, X(t_{j-1})) \Big) \diff{\rho}\\
    &\qquad \times \big( X(s) - X(t_{j - 1}) \big).
  \end{align*}
  Then we make use of the Lipschitz continuity of $\frac{\partial
  g^r}{\partial x}$ and arrive at
  \begin{align*}
    | D^r_2(s) | \le \frac{1}{2} K_g \big| X(s) - X(t_{j-1}) \big|^2. 
  \end{align*}
  Therefore, by an application of \eqref{eq:Lp2}
  \begin{align}
    \label{eq:estD2}
    \begin{split}
      \| D^r_2 \|_{L^p([t_{j-1}, t_j] \times \Omega_W;\R^d)} 
      &\le \frac{1}{2} K_g \Big( \int_{t_{j-1}}^{t_j} 
      \E_W \big[ | X(s) - X(t_{j-1}) |^{2p} \big] \diff{s}
      \Big)^{\frac{1}{p}}\\
      &\le \frac{1}{2} K_g \| X \|_{\mathcal{C}^{\frac{1}{2}}([0,T];
      L^{2p}(\Omega_W;\R^d))}^2 h_j^{1 + \frac{1}{p}}. 
    \end{split}
  \end{align}
  For the estimate of $D^r_3$ first recall the definition of $g^{r,r_2}$ from
  \eqref{eq:diffg}. In addition, we also insert the integral equation
  \eqref{eq:exactint} for $X(s) - X(t_{j-1})$
  and obtain for all $s \in [t_{j-1}, t_j]$
  \begin{align*}
    &D^r_3(s) =  \int_{t_{j-1}}^s \frac{\partial g^r}{\partial x}
    (t_{j-1}, X(t_{j-1})) f(u,X(u)) \diff{u} \\
    &\quad +  \sum_{r_2 = 1}^m \int_{t_{j-1}}^s
    \frac{\partial g^r}{\partial x}(t_{j-1}, X(t_{j-1})) 
    \big( g^{r_2}(u,X(u)) - g^{r_2}(t_{j-1}, X(t_{j-1})) \big)
    \diff{W^{r_2}(u)},
  \end{align*}
  where we also made use of the fact that the random 
  matrix $\frac{\partial g^r}{\partial x}(t_{j-1},
  X(t_{j-1}))$ is $\F_{t_{j-1}}^W$-measurable and is therefore interchangeable
  with the stochastic integral. By the linear growth of $f$ and
  the boundedness of $\frac{\partial g^r}{\partial x}$ we then obtain the
  estimate 
  \begin{align*}
    \Big| \int_{t_{j-1}}^s \frac{\partial g^r}{\partial x}
    (t_{j-1}, X(t_{j-1})) f(u,X(u)) \diff{u} \Big|
    \le \tilde{K}_f K_g \big(1 + \sup_{t \in [0,T]} | X(t) | \big) h_j.
  \end{align*}
  Moreover, from the boundedness of $\frac{\partial
  g^r}{\partial x}$, the H\"older and Lipschitz continuity of $g^{r_2}$, and an
  application of Theorem~\ref{th:Lpstochint} we also get for all $r, r_2 \in
  \{1,\ldots,m\}$ that
  \begin{align*}
    &\Big\| \int_{t_{j-1}}^s
    \frac{\partial g^r}{\partial x}(t_{j-1}, X(t_{j-1})) 
    \big( g^{r_2}(u,X(u)) - g^{r_2}(t_{j-1}, X(t_{j-1})) \big)
    \diff{W^{r_2}(u)} \Big\|_{L^p(\Omega_W;\R^d)}\\
    &\quad \le C_p K_g^2 h_{j}^{\frac{p - 2}{2p}} \Big( \int_{t_{j-1}}^{t_j}
    \E_W \big[ \big( (1 + |X(u)|) |u - t_{j-1}|^{\min(\frac{1}{2}+\gamma,1)}
    \\&\qquad\qquad\qquad  \qquad
    + | X(u) - X(t_{j-1}) | \big)^p \big] \diff{u} \Big)^{\frac{1}{p}}\\
    &\quad \le C_p K_g^2 \big( 1 + 
    \| X \|_{\mathcal{C}^{\frac{1}{2}}([0,T];L^{p}(\Omega_W;\R^d))} \big)
    h_{j}, 
  \end{align*}
  since $\frac{1}{2} \le \min(\frac{1}{2}+\gamma,1)$ and 
  $h_{j}^{\frac{p - 2}{2p} + \frac{1}{p} + \frac{1}{2} } = h_j$. In sum, after
  integrating these estimates over $[t_{j-1},t_j]$ we obtain
  the estimate
  \begin{align}
    \label{eq:estD3}
    \begin{split}
      \| D^r_3 \|_{L^p([t_{j-1}, t_j] \times \Omega_W;\R^d)}
      \le K_g (\tilde{K}_f +  C_p K_g) \big( 1 + \| X
      \|_{\mathcal{C}^{\frac{1}{2}}([0,T];L^{p}(\Omega_W;\R^d))} \big)  
      h_j^{1 + \frac{1}{p}}.
    \end{split}
  \end{align}
  Altogether, by combining \eqref{eq:estD1}, \eqref{eq:estD2}, and
  \eqref{eq:estD3} and due to $\| X
  \|_{\mathcal{C}^{\frac{1}{2}}([0,T];L^{p}(\Omega_W;\R^d))} 
  \le \| X \|_{\mathcal{C}^{\frac{1}{2}}([0,T];L^{2p}(\Omega_W;\R^d))}$
  we finally arrive at
  \begin{align*}
    \| G^r \|_{L^p([t_{j-1}, t_j] \times \Omega_W;\R^d)}
    \le C \big( 1 + \| X
    \|_{\mathcal{C}^{\frac{1}{2}}([0,T];L^{2p}(\Omega_W;\R^d))}^2 \big)   
    h_j^{\min(\frac{1}{2} + \gamma,1) + \frac{1}{p}},
  \end{align*}
  for some constant $C$ only depending on $\tilde{K}_f$, $K_g$, $p$.
  Inserting this into \eqref{eq:estGammar} and \eqref{eq6:step1} then
  yields the assertion.
\end{proof}

\begin{theorem} 
  \label{th:Consistence}
  Let Assumptions~\ref{as:X0} to \ref{as:g} be satisfied with
  $p \in [2,\infty)$ and $\gamma \in (0,1]$.
  Then, the residual $R_h = R_h[X|_{\pi_h}]$ defined in \eqref{eq:residual}
  of the exact solution $X$ can be estimated by
  \begin{align*}
    \| R_h \|_{S,p} 
    &\le \| X_h^0 - X(0) \|_{L^p(\Omega;\R^d)} + C 
    \big( 1 + \| X \|_{
    \mathcal{C}^{\frac{1}{2}}([0,T]; L^{2p}(\Omega_W;\R^d))}^2 \big)
    |h|^{\min(\frac{1}{2} + \gamma, 1)},
  \end{align*}
  where the constant $C \in (0,\infty)$ only depends on $T$, $p$, $m$,
  $\tilde{K}_f$, and $K_g$. In particular, if $X_h^0 = X(0) = X_0$, then
  the drift-randomized Milstein method \eqref{eq:RandM} is consistent 
  of order $\beta = \min(\frac{1}{2} + \gamma, 1)$.
\end{theorem}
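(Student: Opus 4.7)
My plan is to write the residual as a sum of three contributions and bound each using tools already at hand. For $j \ge 1$, the integral representation \eqref{eq:exactint} of $X$ applied to the interval $[t_{j-1}, t_j]$ gives
\begin{align*}
 R_h^j &= \Big(\int_{t_{j-1}}^{t_j} f(s,X(s))\diff{s}
 - h_j f\big(t_{j-1}+\tau_j h_j, \Psi_h^j(X(t_{j-1}),\tau_j)\big)\Big)
 + \sum_{r=1}^m \Gamma_{(r)}^j,
\end{align*}
and I further split the drift part as $A_j + B_j$, where
\begin{align*}
A_j &:= \int_{t_{j-1}}^{t_j} f(s, X(s))\diff{s}
- h_j f\big(t_{j-1}+\tau_j h_j,\, X(t_{j-1}+\tau_j h_j)\big),\\
B_j &:= h_j \Big( f\big(t_{j-1}+\tau_j h_j,\, X(t_{j-1}+\tau_j h_j)\big)
- f\big(t_{j-1}+\tau_j h_j,\, \Psi_h^j(X(t_{j-1}),\tau_j)\big) \Big).
\end{align*}
Together with $R_h^0 = X_h^0 - X(0)$, the triangle inequality for the Spijker norm \eqref{eq:Spijker} reduces the theorem to bounding $\|\max_n|\sum_j A_j|\|_{L^p(\Omega)}$, $\|\max_n|\sum_j B_j|\|_{L^p(\Omega)}$, and $\|\max_n|\sum_j\sum_r \Gamma_{(r)}^j|\|_{L^p(\Omega_W)}$.

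The $\Gamma$-term is exactly the content of Lemma~\ref{lm:milstein1}, which already delivers the rate $|h|^{\min(\tfrac12+\gamma,1)}$ with a constant of the desired form. For $A_j$, I observe that $A_j$ is precisely the local randomized quadrature error for the integrand $Y(s) := f(s, X(s))$, so $\sum_{j=1}^n A_j = Q_{\tau,h}^n[Y] - \int_0^{t_n} Y(s)\diff{s}$. I want to apply Theorem~\ref{th4:randRiemann}, which needs $Y \in \mathcal{C}^{\delta}([0,T]; L^p(\Omega_W;\R^d))$. Using Assumption~\ref{as:f} in the time variable, the Lipschitz property in space, and the $\tfrac12$-Hölder regularity of $X$ from Theorem~\ref{th:Lp_est}, I obtain
\begin{align*}
\|f(t,X(t)) - f(s,X(s))\|_{L^p(\Omega_W)}
\le C\big(1 + \|X\|_{\mathcal{C}^{1/2}([0,T];L^{2p}(\Omega_W;\R^d))}\big)
|t-s|^{\min(\gamma,1/2)},
\end{align*}
so $\delta = \min(\gamma,1/2)$ is admissible. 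Theorem~\ref{th4:randRiemann} then yields the desired rate $|h|^{1/2 + \min(\gamma,1/2)} = |h|^{\min(1/2+\gamma,1)}$.

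For the $B_j$ piece, Lipschitz continuity of $f$ in the spatial argument and a crude termwise estimate give
\begin{align*}
\Big\|\max_n\Big|\sum_{j=1}^n B_j\Big|\Big\|_{L^p(\Omega)}
\le K_f \sum_{j=1}^{N_h} h_j \big\| X(t_{j-1}+\tau_j h_j)
- \Psi_h^j(X(t_{j-1}),\tau_j) \big\|_{L^p(\Omega;\R^d)}.
\end{align*}
Subtracting the integral equation for $X(t_{j-1}+\tau_j h_j) - X(t_{j-1})$ from the definition \eqref{eq:PsiM} of $\Psi_h^j$, the difference is a drift integral of $f(s,X(s))-f(t_{j-1},X(t_{j-1}))$ over $[t_{j-1},t_{j-1}+\tau_j h_j]$ plus the analogous stochastic integral of $g^r(s,X(s))-g^r(t_{j-1},X(t_{j-1}))$. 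Using the linear and Hölder growth of $f$ and $g^r$, the regularity of $X$, and Theorem~\ref{th:Lpstochint}, each term is $O(h_j)$ in $L^p(\Omega;\R^d)$ with a constant proportional to $1+\|X\|_{\mathcal{C}^{1/2}([0,T];L^{2p}(\Omega_W;\R^d))}$. Since $\sum_j h_j = T$ and $|h| \le |h|^{\min(1/2+\gamma,1)}$ for $|h|\le 1$, this contributes at the required rate. Combining the three bounds completes the proof.

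The main technical obstacle will be the third bound above: the estimate of $X(t_{j-1}+\tau_j h_j) - \Psi_h^j(X(t_{j-1}),\tau_j)$ couples both sources of randomness (the Wiener increments and $\tau_j$) and uses the Itô-integral estimate together with Fubini as already done in \eqref{eq5:est_incr}, so care is needed to control the independence structure when estimating the stochastic integral in $L^p(\Omega;\R^d)$ rather than $L^p(\Omega_W;\R^d)$. Everything else is a routine but notation-heavy bookkeeping of constants and Hölder exponents.
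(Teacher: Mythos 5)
Your proposal is correct and follows essentially the same route as the paper: the identical three-way decomposition into the randomized quadrature error for $f(\cdot,X(\cdot))$ (handled by Theorem~\ref{th4:randRiemann} with H\"older exponent $\min(\gamma,\tfrac12)$), the Lipschitz-controlled term $X(t_{j-1}+\tau_j h_j)-\Psi_h^j(X(t_{j-1}),\tau_j)$ of order $h_j$, and the diffusion remainder covered by Lemma~\ref{lm:milstein1}. No gaps; the ``technical obstacle'' you flag is resolved exactly as in the paper via Fubini and Theorem~\ref{th:Lpstochint}.
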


\begin{proof} 
  Let $\pi_h = \{ 0 = t_0 < t_1 < \ldots < t_{N_h} = T\}$ be an arbitrary
  temporal grid with maximum step size $|h| \le \min(1,T)$. 
  First recall the definition \eqref{eq:residual} of the residual of
  $X|_{\pi_h}$ for $j \in \{1,\ldots,N_h\}$
  \begin{align*}
    R_h^j := R_h^j[ X|_{\pi_h} ] = X(t_j) - X(t_{j - 1}) 
    - \Phi^j(X(t_{j-1}), \tau_j).
  \end{align*}
  We have to estimate $R_h$ with respect to the Spijker norm
  $\| \cdot \|_{S,p}$. To this end  
  we expand the residual by inserting \eqref{eq:exactint} and
  \eqref{eq:PhiM_expression}. For $j \in \{1,\ldots,N_h\}$
  we then have
  \begin{align*}
    R_h^j &= \int_{t_{j-1}}^{t_j} f(s,X(s)) \diff{s} 
    - h_j f(t_{j-1} + \tau_j h, X(t_{j-1} + \tau_j h)) \\
    &\quad + h_j \big(f(t_{j-1} + \tau_j h, X(t_{j-1} + \tau_j h)) -
    f(t_{j-1} + \tau_j h, \Psi_h^j ( X(t_{j-1}), \tau_j) ) \big) \\
    &\quad +\sum_{r = 1}^m \Gamma_{(r)}^j,  
  \end{align*}
  where $\Gamma_{(r)}^j$ is the same as in Lemma~\ref{lm:milstein1}. After
  summing over $j \in \{1,\ldots,n\}$ and taking the Euclidean norm in $\R^d$
  we get
  \begin{align*}
    \Big| \sum_{j = 1}^n R_h^j \Big| &\le \Big|\int_0^{t_n} f(s,X(s)) \diff{s}
    - Q_{\tau,h}^n[f(\cdot,X(\cdot))] \Big| \\
    &\quad + K_f \sum_{j = 1}^n
    h_j \big|  X(t_{j-1} + \tau_j h) - \Psi_h^j ( X(t_{j-1}), \tau_j) \big| 
    + \Big| \sum_{r = 1}^m \sum_{j = 1}^n \Gamma_{(r)}^j \Big|,
  \end{align*}
  where we also inserted the definition of the randomized quadrature rule
  $Q_{\tau,h}^n$ from \eqref{eq:defQtau} and made use of the Lipschitz
  continuity of $f$. Next, we take the maximum over all $n \in
  \{1,\ldots,N_h\}$ and apply the $L^p(\Omega)$-norm. This yields the
  estimate
  \begin{align*}
    &\Big\| \max_{n \in \{1,\ldots,N_h\}} \Big| \sum_{j = 1}^n R_h^j \Big| \,
    \Big\|_{L^p(\Omega)} \\
    &\quad \le \Big\| \max_{n \in \{1,\ldots,N_h\}} \Big| \int_0^{t_n} f(s,X(s))
    \diff{s} - Q_{\tau,h}^n[f(\cdot,X(\cdot))] \Big| \, \Big\|_{L^p(\Omega)}\\
    &\qquad +  K_f \sum_{j = 1}^{N_h} h_j 
    \big\|  X(t_{j-1} + \tau_j h) - \Psi_h^j ( X(t_{j-1}), \tau_j)
    \big\|_{L^p(\Omega;\R^d)}\\
    &\qquad+ \Big\| \max_{n \in \{1,\ldots,N_h\}} \Big|
    \sum_{r = 1}^m \sum_{j = 1}^n \Gamma_{(r)}^j \Big| \,
    \Big\|_{L^p(\Omega)}.
  \end{align*}
  Next, from Assumption~\ref{as:f} it follows that
  the process $Y(s) := f(s,X(s))$, $s \in [0,T]$, is H\"older continuous with
  exponent $\nu = \min(\gamma, \frac{1}{2})$. In particular,
  \begin{align*}
    \| Y \|_{\mathcal{C}^{\nu}([0,T],L^p(\Omega_W;\R^d))}
    \le \tilde{K}_f \big( 1 + \| X
    \|_{\mathcal{C}^{\frac{1}{2}}([0,T],L^p(\Omega_W;\R^d))} \big).
  \end{align*}
  Therefore, Theorem~\ref{th4:randRiemann} is applicable and yields
  \begin{align*}
    &\Big\| \max_{n \in \{1,\ldots,N_h\}} \Big| \int_0^{t_n} f(s,X(s))
    \diff{s} - Q_{\tau,h}^n[f(\cdot,X(\cdot))] \Big| \, \Big\|_{L^p(\Omega)}\\
    &\qquad \le C T^{\frac{1}{2}} \tilde{K}_f \big( 1 + \| X
    \|_{\mathcal{C}^{\frac{1}{2}}([0,T],L^p(\Omega_W;\R^d))} \big)
    |h|^{\min(\frac{1}{2} + \gamma, 1)},
  \end{align*}
  since $\nu + \frac{1}{2} = \min(\frac{1}{2} + \gamma, 1)$.

  In addition, Lemma~\ref{lm:milstein1} ensures 
  \begin{align*}
    \Big\| \max_{n\in \{1,\ldots, N_h\}}
    \Big|\sum_{j=1}^{n}\sum_{r=1}^m \Gamma^r_j\Big|\,
    \Big\|_{L^p(\Omega_W)}
    \le C \big( 1 + \| X \|_{ 
    \mathcal{C}^{\frac{1}{2}}([0,T]; L^{2p}(\Omega_W;\R^d))}^2 \big) 
    |h|^{\min(\frac{1}{2} + \gamma, 1)}.
  \end{align*}
  Therefore, it remains to give an estimate for
  \begin{align}
    \label{eq6:term3}
    \begin{split}
      &\big\|  X(t_{j-1} + \tau_j h) - \Psi_h^j ( X(t_{j-1}), \tau_j)
      \big\|_{L^p(\Omega;\R^d)}\\
      &\quad \le \Big\| \int_{t_{j-1}}^{t_{j-1}+\tau_j h_j} 
      \big( f(s,X(s))-f\left(t_{j-1},X(t_{j-1})\right) \big) \diff{s} 
      \Big\|_{L^p(\Omega;\R^d)}\\
      &\qquad + \sum_{r=1}^m \Big\| 
      \int_{t_{j-1}}^{t_{j-1}+\tau_jh_j} \big(g^r\big(s,X(s)\big) 
      - g^r\big(t_{j-1},X(t_{j-1})\big)\big) \diff{W^r(s)} 
      \Big\|_{L^p(\Omega;\R^d)},
    \end{split}
  \end{align}
  where we inserted \eqref{eq:exactint} and \eqref{eq:PsiM}. Then, 
  by an application of Assumption~\ref{as:f} to the first term on the right
  hand side in \eqref{eq6:term3} we obtain
  \begin{align*}
    &\Big\| \int_{t_{j-1}}^{t_{j-1}+\tau_j h_j} 
    \big( f(s,X(s))-f\left(t_{j-1},X(t_{j-1})\right) \big) \diff{s} 
    \Big\|_{L^p(\Omega;\R^d)}\\
    &\; \le \Big( \E_\tau \Big[ \E_W \Big[ \Big(
    \int_{t_{j-1}}^{t_{j-1} + \tau_j h_j }
    \big| f(s,X(s))-f\left(t_{j-1},X(t_{j-1})\right) \big| \diff{s} \Big)^p
    \Big] \Big] \Big)^{\frac{1}{p}}\\
    &\; \le K_f \Big( \E_\tau \Big[ \E_W \Big[ \Big(
    \int_{t_{j-1}}^{t_{j}}
    \big( (1 + |X(t_{j-1})| ) |s - t_{j-1}|^\gamma + | X(s) - X(t_{j-1}) |
    \big) \diff{s} \Big)^p \Big] \Big] \Big)^{\frac{1}{p}}\\
    &\; \le K_f \big( 1 + \| X \|_{ 
    \mathcal{C}^{\frac{1}{2}}([0,T]; L^{p}(\Omega_W;\R^d))} \big)
    h_j^{1 + \min(\gamma, \frac{1}{2})}.
  \end{align*}
  Moreover, an application of 
  Theorem~\ref{th:Lpstochint} and Assumption~\ref{as:g} to the second
  term on the right hand side of \eqref{eq6:term3} yields
  \begin{align*}
    &\Big\| \int_{t_{j-1}}^{t_{j-1}+\tau_jh_j} \big(g^r\big(s,X(s)\big) 
    - g^r\big(t_{j-1},X(t_{j-1})\big)\big) \diff{W^r(s)} 
    \Big\|_{L^p(\Omega;\R^d)}^p\\
    &\; = \E_\tau \Big[ \E_W \Big[ \Big|
    \int_{t_{j-1}}^{t_{j-1}+\tau_jh_j} \big(g^r (s,X(s)) 
    - g^r( t_{j-1},X(t_{j-1}) ) \big) \diff{W^r(s)} \Big|^p \Big] \Big]
    \\
    &\; \le C_p^p h_j^{\frac{p-2}{2}}  
    \big\| g^r ( \cdot, X(\cdot) ) - g^r( t_{j-1},X(t_{j-1}) )
    \big\|_{L^p([t_{j-1},t_j] \times \Omega_W;\R^d)}^p\\
    &\; \le K_g^p C_p^p h_j^{\frac{p-2}{2}}
    \int_{t_{j-1}}^{t_j} \E_W \Big[ \Big(
    (1 + |X(t_{j-1})| ) |s - t_{j-1}|^{\min(\frac{1}{2}+\gamma,1)}
    \\
    &\qquad \qquad \qquad \qquad + | X(s) - X(t_{j-1}) |
    \Big)^p\Big] \diff{s} \\
    &\;\le K_g^p C_p^p  \big( 1 + \| X \|_{ 
    \mathcal{C}^{\frac{1}{2}}([0,T]; L^{p}(\Omega_W;\R^d))} \big)^p
    h_j^{p},
  \end{align*}
  since $h_j^{\frac{p - 2}{2} + \frac{p}{2} + 1  } = h_j^p$. Taking the $p$-th
  root and inserting this into \eqref{eq6:term3} then yields 
  \begin{align*}
    &\big\|  X(t_{j-1} + \tau_j h) - \Psi_h^j ( X(t_{j-1}), \tau_j)
    \big\|_{L^p(\Omega;\R^d)}\\
    &\quad \le ( K_f + m K_g C_p) \big( 1 + \| X \|_{ 
    \mathcal{C}^{\frac{1}{2}}([0,T]; L^{p}(\Omega_W;\R^d))} \big)
    h_j. 
  \end{align*}
  Altogether, we have shown that
  \begin{align*}
    \| R_h \|_{S,p} 
    &= \| X_h^0 - X_0 \|_{L^p(\Omega;\R^d)}
    + \Big\| \max_{n \in \{1,\ldots,N_h\}} \Big| \sum_{j = 1}^n R_h^j \Big| \,
    \Big\|_{L^p(\Omega)} \\
    &\le \| X_h^0 - X_0 \|_{L^p(\Omega;\R^d)} + C 
    \big( 1 + \| X \|_{
    \mathcal{C}^{\frac{1}{2}}([0,T]; L^{2p}(\Omega_W;\R^d))}^2 \big)
    |h|^{\min(\frac{1}{2} + \gamma, 1)}.
  \end{align*}
  This completes the proof.
\end{proof}

The proof of Theorem~\ref{th:main2} is now a simple consequence of the above.

\begin{proof}[Proof of Theorem \ref{th:main2}]
  Since the drift-randomized Milstein method is bistable (see
  Theorem~\ref{th:Bistability}) we apply
  the bistability inequality \eqref{eq:bistability} with 
  $Y_h := X|_{\pi_h}$. Then, an application of
  Theorem~\ref{th:Consistence} yields 
  \begin{align*}
    \| X_h - X|_{\pi_h} \|_{p,\infty}
    &= \Big\|\max_{n \in \{1,\ldots,N_h\}}
    |X_h^n-X(t_n)| \Big\|_{L^p(\Omega)}\\ 
    &\le C_2 \| R_h[ X|_{\pi_h} ] \|_{S,p}\\
    &\le C \big( 1 + \| X \|_{
    \mathcal{C}^{\frac{1}{2}}([0,T]; L^{2p}(\Omega_W;\R^d))}^2 \big)
    |h|^{\min(\frac{1}{2} + \gamma, 1)},
  \end{align*}
  as claimed.
\end{proof}


\section{Implementation and a numerical example}
\label{sec:examples}

In this section the implementation of the randomized Milstein method is
discussed and a numerical experiment is conducted. 


Being an explicit method, the implementation of the drift-randomized Milstein
method is mostly straightforward. The only obstacle that needs to
be treated carefully is the simulation of the intermediate stochastic
increments $I_{(r)}^{t_{j-1},t_{j-1}+ \tau_j h_j }$ for all $r \in
\{1,\ldots,m\}$ in the computation of $X_h^{j,\tau}$ in \eqref{eq:RandM}. In
particular, it is important that 
the additional information on the path of the Wiener process at
the (random) intermediate 
time point $t_{j-1}+ \tau_j h_j$ is also taken into account
in the computation of $I_{(r)}^{t_{j-1},t_j}$ and $I_{(r_1,r_2)}^{t_{j-1},
t_j}$. This is ensured by the following step by step procedure:

\begin{enumerate}
  \item First simulate $\tau_j \sim \mathcal{U}(0,1)$ and set $\theta_j :=
    t_{j-1}+ \tau_j h_j$.
  \item Then simulate $I_{(r)}^{t_{j-1},\theta_j}$ and
    $I_{(r_1,r_2)}^{t_{j-1},\theta_j}$ jointly for all $r,r_1,r_2 \in
    \{1,\ldots,m\}$ as in the case of the classical 
    Milstein method, see for instance \cite[Sec.~5.8]{kloeden1999}. 
  \item In the same way simulate $I_{(r)}^{\theta_j, t_j}$ and
    $I_{(r_1,r_2)}^{\theta_j, t_j}$ for all $r,r_1,r_2 \in \{1,\ldots,m\}$.
  \item Then we obtain
    $I_{(r)}^{t_{j-1},t_j}$ and $I_{(r_1,r_2)}^{t_{j-1}, t_j}$ from
    \begin{align*}
      I_{(r)}^{t_{j-1},t_j} = I_{(r)}^{t_{j-1},\theta_j} + I_{(r)}^{\theta_j,
      t_j} 
    \end{align*}
    as well as (Chen's relation)
    \begin{align*}
      I_{(r_1,r_2)}^{t_{j-1}, t_j} = I_{(r_1,r_2)}^{t_{j-1},\theta_j} + 
      I_{(r_1,r_2)}^{\theta_j, t_j} + I_{(r_1)}^{t_{j-1},\theta_j}
      I_{(r_2)}^{\theta_j, t_j}. 
    \end{align*}
  \item Compute $X_h^j$ as defined in \eqref{eq:RandM}.
\end{enumerate}

Listing~\ref{py:RandM} shows an implementation  of method \eqref{eq:RandM} in
the case of a 1-dimensional Wiener process ($m=1$) in \textsc{Python}.
This allows us to
compute the iterated stochastic increment $I_{(1,1)}^{s,t}$ for $s,t \in
[0,T]$, $s<t$, efficiently by the relationship
\begin{align*}
  I_{(1,1)}^{s,t} = \frac{1}{2}\big( ( I_{(1)}^{s,t} )^2 - (t-s) \big).
\end{align*}
This algorithm is easily adapted to the case of
multi-dimensional Wiener processes if the coefficient functions $g^{r_1,r_2}$
defined in \eqref{eq:diffg} satisfy the commutativity condition
$g^{r_1,r_2} = g^{r_2, r_1}$ for all $r_1, r_2 \in \{1,\ldots,m\}$. Compare
further with \cite[Sec.~10.3]{kloeden1999}.

\lstinputlisting[caption={A sample implementation of \eqref{eq:RandM} in
\textsc{Python}}, 
language=Python, label=py:RandM]{RandMilstein1D.py}

Next, we consider the numerical solution of the scalar SDE 
\begin{align}
  \label{eq:SDEexample1}
  \begin{split}
    \begin{cases}
      &\diff{X(t)} = (\mu |X(t)|+|\sin( w_1 t)|)\diff{t}+ |\cos( w_2
      t)|X(t)\diff{W(t)}, \quad t \in [0,T],\\
      &\ \ X(0) = X_0,
    \end{cases}
  \end{split}
\end{align}
where $\mu$, $w_1$ and $w_2$ are real constants. It is easily verified that
Assumptions~\ref{as:f} and \ref{as:g} are fulfilled. In the
experiment, we set $\mu=-0.01$, $w_1=2^{6}\pi$, $w_2=1,$ $X_0=1.1$ and $T=1$.
We compare the numerical solution of \eqref{eq:SDEexample1} by 
the drift-randomized Milstein scheme \eqref{eq:RandM} and its 
classical counter-part. We approximate the error only at the terminal 
time $T=1$ with respect to the $L^2$-norm 
by a Monte Carlo simulation with $1000$ independent samples.
Hereby, the reference solution is obtained using the randomized Milstein
scheme with a finer step size of $h_{\mathrm{ref}} = 2^{-15} T$. 

In Figure~\ref{fig_error}, we plot the 
root-mean-squared errors against the underlying step size, i.e., the number $n$
on the $x$-axis indicates the corresponding simulation is based on the step
size $h = 2^{-n} T $. The finest step size here is $2^{-14}T$. The two sets of
error data are fitted with a linear function via linear regression
respectively, where the slope of the line indicates the average order of
convergence.  It is noted that the classical Milstein scheme does not begin to
converge until $n=6$. The reason for this is, that
for any coarser (equidistant) step size larger than $2^{-6}T$ the classical Milstein scheme cannot
distinguish the term $|\sin( w_1 t)|$ in the drift from the zero function.
In contrast, the randomized Milstein method shows better results already for
much coarser step sizes. The experimental order of convergence is $0.83$ up to $n=6$ compared with the order $0.19$ via classical Milstein. Note that afterwards the error from classical method begin to shrink at a faster pace and eventually decay at the same rate as randomized Milstein method.  

Finally, we briefly compare the computational efficiency of the two methods.
Clearly, due to the additional computation of $X_h^{j,\tau}$ the randomized
Milstein method is \eqref{eq:RandM} approximately twice as expensive as the
classical one with the same step size. We also observe this in our
experiment, since the data points of the classical Milstein method are shifted to
the left in Figure~\ref{fig_time}, where the CPU times of these schemes are
plotted versus their accuracy. But due to its better accuracy 
the randomized Milstein method is superior for all the step
sizes larger than $2^{-6}T$. However, when even smaller step sizes are
considered, the error of the classical Milstein method will quickly decrease to
the level of the randomized one. In the scalar case the  

\begin{figure}%
  \begin{center}%
      \caption{\small Numerical experiment for  SDE \eqref{eq:SDEexample1}:
      Step sizes versus $L^2$ errors
      \label{fig_error}} 
      \includegraphics[width=0.7\textwidth]{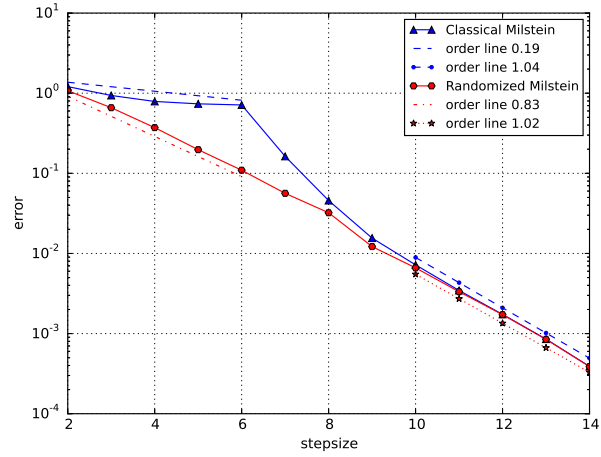}
      \\[-10pt] %
  \end{center}
\end{figure}   

\begin{figure}%
  \begin{center}%
      \caption{\small Numerical experiment for  SDE \eqref{eq:SDEexample1}:
      CPU time versus $L^2$ errors \label{fig_time}} 
      \includegraphics[width=0.7\textwidth]{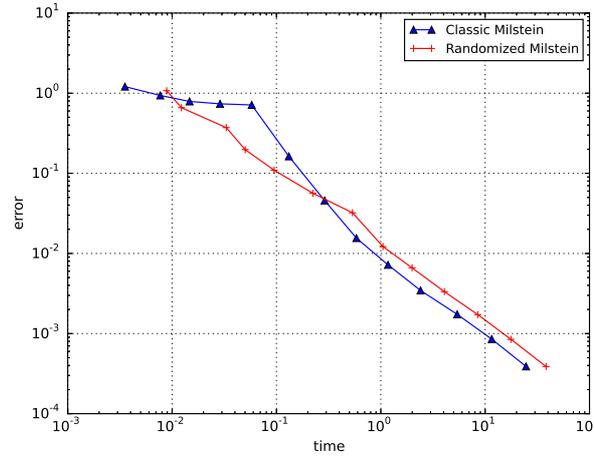}
      \\[-10pt] %
  \end{center}
\end{figure}

\section*{Acknowledgement}

This research was carried out in the framework of \textsc{Matheon}
supported by Einstein Foundation Berlin. The authors also gratefully
acknowledge financial support by the German Research Foundation through the
research unit FOR 2402 -- Rough paths, stochastic partial differential
equations and related topics -- at TU Berlin.


\begin{thebibliography}{10}

\bibitem{beyn2010}
W.-J. Beyn and R.~Kruse.
\newblock Two-sided error estimates for the stochastic theta method.
\newblock {\em Discrete Contin. Dyn. Syst. Ser. B}, 14(2):389--407, 2010.

\bibitem{burkholder1966}
D.~L. Burkholder.
\newblock Martingale transforms.
\newblock {\em Ann. Math. Statist.}, 37:1494--1504, 1966.

\bibitem{clark1980}
J.~M.~C. Clark and R.~J. Cameron.
\newblock The maximum rate of convergence of discrete approximations for
  stochastic differential equations.
\newblock In {\em Stochastic differential systems ({P}roc. {IFIP}-{WG} 7/1
  {W}orking {C}onf., {V}ilnius, 1978)}, volume~25 of {\em Lecture Notes in
  Control and Information Sci.}, pages 162--171. Springer, Berlin, 1980.

\bibitem{daun2011}
T.~Daun.
\newblock On the randomized solution of initial value problems.
\newblock {\em J. Complexity}, 27(3-4):300--311, 2011.

\bibitem{emmrich1999}
E.~Emmrich.
\newblock Discrete versions of {G}ronwall's lemma and their application to the
  numerical analysis of parabolic problems.
\newblock {\em TU Berlin, FB Mathematik, Preprint}, 637-1999, 1999.

\bibitem{gaines1994}
J.~G. Gaines and T.~J. Lyons.
\newblock Random generation of stochastic area integrals.
\newblock {\em SIAM J. Appl. Math.}, 54(4):1132--1146, 1994.

\bibitem{giles2014}
M.~B. Giles and L.~Szpruch.
\newblock Antithetic multilevel {M}onte {C}arlo estimation for
  multi-dimensional {SDE}s without {L}\'evy area simulation.
\newblock {\em Ann. Appl. Probab.}, 24(4):1585--1620, 2014.

\bibitem{gyoengy1998}
I.~Gy{\"o}ngy.
\newblock A note on {E}uler's approximations.
\newblock {\em Potential Anal.}, 8(3):205--216, 1998.

\bibitem{haber1966}
S.~Haber.
\newblock A modified {M}onte-{C}arlo quadrature.
\newblock {\em Math. Comp.}, 20:361--368, 1966.

\bibitem{haber1967}
S.~Haber.
\newblock A modified {M}onte-{C}arlo quadrature. {II}.
\newblock {\em Math. Comp.}, 21:388--397, 1967.

\bibitem{heinrich2008}
S.~Heinrich and B.~Milla.
\newblock The randomized complexity of initial value problems.
\newblock {\em J. Complexity}, 24(2):77--88, 2008.

\bibitem{hutzenthaler2011}
M.~Hutzenthaler, A.~Jentzen, and P.~E. Kloeden.
\newblock Strong and weak divergence in finite time of {E}uler's method for
  stochastic differential equations with non-globally {L}ipschitz continuous
  coefficients.
\newblock {\em Proc. R. Soc. Lond. Ser. A Math. Phys. Eng. Sci.},
  467(2130):1563--1576, 2011.

\bibitem{jentzen2009}
A.~Jentzen and A.~Neuenkirch.
\newblock A random {E}uler scheme for {C}arath\'eodory differential equations.
\newblock {\em J. Comput. Appl. Math.}, 224(1):346--359, 2009.

\bibitem{kacewicz1987}
B.~Kacewicz.
\newblock Optimal solution of ordinary differential equations.
\newblock {\em J. Complexity}, 3(4):451--465, 1987.

\bibitem{kacewicz2006}
B.~Kacewicz.
\newblock Almost optimal solution of initial-value problems by randomized and
  quantum algorithms.
\newblock {\em J. Complexity}, 22(5):676--690, 2006.

\bibitem{kloeden2007}
P.~E. Kloeden and A.~Neuenkirch.
\newblock The pathwise convergence of approximation schemes for stochastic
  differential equations.
\newblock {\em LMS J. Comput. Math.}, 10:235--253, 2007.

\bibitem{kloeden1999}
P.~E. Kloeden and E.~Platen.
\newblock {\em Numerical Solution of Stochastic Differential Equations}.
\newblock Springer, Berlin, third edition, 1999.

\bibitem{kruse2012}
R.~Kruse.
\newblock Characterization of bistability for stochastic multistep methods.
\newblock {\em BIT}, 52(1):109--140, 2012.

\bibitem{kruse2014b}
R.~Kruse.
\newblock Consistency and stability of a {M}ilstein-{G}alerkin finite element
  scheme for semilinear {SPDE}.
\newblock {\em Stoch. Partial Differ. Equ. Anal. Comput.}, 2(4):471--516, 2014.

\bibitem{kruse2017}
R.~Kruse and Y.~Wu.
\newblock Error analysis of randomized {Runge-Kutta} methods for differential
  equations with time-irregular coefficients.
\newblock {\em Comput. Methods Appl. Math.}, 2017.
\newblock (to appear).

\bibitem{mao2008}
X.~Mao.
\newblock {\em Stochastic differential equations and applications}.
\newblock Horwood Publishing Limited, Chichester, second edition, 2008.

\bibitem{milstein1974}
G.~N. Milstein.
\newblock Approximate integration of stochastic differential equations.
\newblock {\em Teor. Verojatnost. i Primenen.}, 19:583--588, 1974.
\newblock in Russian.

\bibitem{milstein1975}
G.~N. Milstein.
\newblock Approximate integration of stochastic differential equations.
\newblock {\em Theory Probab. Appl.}, 19(3):557--562, 1975.
\newblock translated by K. Durr.

\bibitem{milstein1995}
G.~N. Milstein.
\newblock {\em Numerical integration of stochastic differential equations},
  volume 313 of {\em Mathematics and its Applications}.
\newblock Kluwer Academic Publishers Group, Dordrecht, 1995.
\newblock Translated and revised from the 1988 Russian original.

\bibitem{milstein2004}
G.~N. Milstein and M.~V. Tretyakov.
\newblock {\em Stochastic Numerics for Mathematical Physics}.
\newblock Scientific Computation. Springer-Verlag, Berlin, 2004.

\bibitem{morkisz2017}
P.~M. Morkisz and P.~Przyby{\l}owicz.
\newblock Optimal pointwise approximation of {SDE}'s from inexact information.
\newblock {\em J. Comput. Appl. Math.}, 324:85--100, 2017.

\bibitem{przybylowicz2015a}
P.~Przyby{\l}owicz.
\newblock Minimal asymptotic error for one-point approximation of {SDE}s with
  time-irregular coefficients.
\newblock {\em J. Comput. Appl. Math.}, 282:98--110, 2015.

\bibitem{przybylowicz2015b}
P.~Przyby{\l}owicz.
\newblock Optimal global approximation of {SDE}s with time-irregular
  coefficients in asymptotic setting.
\newblock {\em Appl. Math. Comput.}, 270:441--457, 2015.

\bibitem{przybylowicz2014}
P.~Przyby{\l}owicz and P.~Morkisz.
\newblock Strong approximation of solutions of stochastic differential
  equations with time-irregular coefficients via randomized {E}uler algorithm.
\newblock {\em Appl. Numer. Math.}, 78:80--94, 2014.

\bibitem{ryden2001}
T.~Ryd{\'e}n and M.~Wiktorsson.
\newblock On the simulation of iterated {I}t\^o integrals.
\newblock {\em Stochastic Process. Appl.}, 91(1):151--168, 2001.

\bibitem{spijker1968}
M.~N. Spijker.
\newblock {\em Stability and convergence of finite-difference methods}, volume
  1968 of {\em Doctoral dissertation, University of Leiden}.
\newblock Rijksuniversiteit te Leiden, Leiden, 1968.

\bibitem{spijker1971}
M.~N. Spijker.
\newblock On the structure of error estimates for finite-difference methods.
\newblock {\em Numer. Math.}, 18:73--100, 1971/72.

\bibitem{stengle1990}
G.~Stengle.
\newblock Numerical methods for systems with measurable coefficients.
\newblock {\em Appl. Math. Lett.}, 3(4):25--29, 1990.

\bibitem{stengle1995}
G.~Stengle.
\newblock Error analysis of a randomized numerical method.
\newblock {\em Numer. Math.}, 70(1):119--128, 1995.

\bibitem{stummel1973}
F.~Stummel.
\newblock {\em Approximation methods in analysis}.
\newblock Matematisk Institut, Aarhus Universitet, Aarhus, 1973.
\newblock Lectures delivered during the spring term, 1973, Lecture Notes
  Series, No. 35.

\bibitem{wiktorsson2001}
M.~Wiktorsson.
\newblock Joint characteristic function and simultaneous simulation of iterated
  {I}t\^o integrals for multiple independent {B}rownian motions.
\newblock {\em Ann. Appl. Probab.}, 11(2):470--487, 2001.

\end{thebibliography}

\end{document}